\theoremstyle{plain}
\newtheorem{theorem}{Theorem}[section]
\newtheorem{lemma}[theorem]{Lemma}
\newtheorem{Prop}[theorem]{Proposition}
\theoremstyle{definition}
\newtheorem{Def}[theorem]{Definition}
\newtheorem{Remark}[theorem]{Remark}
\DeclareFontFamily{U}{mathx}{}
\DeclareFontShape{U}{mathx}{m}{n}{<-> mathx10}{}
\DeclareSymbolFont{mathx}{U}{mathx}{m}{n}
\DeclareMathAccent{\widehat}{0}{mathx}{"70}
\DeclareMathAccent{\widecheck}{0}{mathx}{"71}
\begin{document}

\title{Global well-posedness for the Cauchy problem of the Zakharov-Kuznetsov equation on cylindrical spaces}
\author{Satoshi Osawa\thanks{This work was supported by JST SPRING, Grant Number JPMJSP2148.}
\quad
and
\quad
Hideo Takaoka\thanks{This work was supported by JSPS KAKENHI, Grant Number 18H01129.}
}

\newcommand{\Addresses}{{
  \bigskip
  \footnotesize

  S.~Osawa, \textsc{Department of Mathematics, Kobe University, Kobe, 657-8501, Japan}\par\nopagebreak
  \textit{E-mail address}: \texttt{sohsawa@math.kobe-u.ac.jp}

  \medskip

  H.~Takaoka, \textsc{Department of Mathematics, Kobe University, Kobe, 657-8501, Japan}\par\nopagebreak
  \textit{E-mail address}: \texttt{takaoka@math.kobe-u.ac.jp}

}}

\date{\empty}

\maketitle

\begin{abstract}
We prove that the Zakharov-Kuznetsov equation on cylindrical spaces is globally well-posed below the energy norm.
As is known, local well-posedness below energy space was obtained by the first author.
We adapt I-method to extend the solutions globally in time.
Using modified energies, we obtain the polynomial bounds on the $H^s$ growth for the global solutions.
\end{abstract}

\begin{footnotesize}
{\it $2020$ Mathematics Subject Classification Numbers.}
35Q53, 42B37.

{\it Key Words and Phrases.}
Zakharov-Kuznetsov equation, Low regularity global well-posedness, Bilinear estimates.
\end{footnotesize}

\section{Introduction}
\indent

In this paper, we study the Cauchy problem for the Zakharov-Kuznetsov equation on a cylinder:
\begin{equation}\label{ZK}
\begin{cases}
& \partial_t u +\partial_x \Delta u + u\partial_x u = 0,\\
& u(x,y,0) = u_0 (x,y),
\end{cases}
\quad (x,y) \in \mathbb{R} \times \mathbb{T}, \quad t \in \mathbb{R},
\end{equation}
where $u = u(x,y,t)$ is a real-valued function, $\mathbb{T} = \mathbb{R} / (2 \pi \mathbb{Z})$, and $\Delta = \partial^2_x + \partial^2_y$ is the Laplacian.
This equation was introduced by Zakharov and Kuznetsov \cite{Zakharov}, as a model for the propagation of ionic-acoustic waves in magnetized plasma.
This equation is one of the two-dimensional extensions of the Korteweg-de Vries (KdV) equation:
$$
\partial_t u +\partial_{xxx}  u + u\partial_x u = 0.
$$
The KdV equation is one of the famous nonlinear dispersive equation, which describes the behavior of shallow water waves.
Another two dimensional generalizations of the KdV equation is the Kadomtsev-Petviashvili (KP) equations:
$$
\partial_x\left(\partial_t u +\partial_{xxx}  u + u\partial_x u\right)\pm \partial_{yy}u = 0,
$$
where the KP-I equation corresponds to minus sign, while the KP-I\!I equation to plus sign.

The Zakharov-Kuznetsov equation \eqref{ZK} has at least two conserved quantities:
$$
E[u](t) = \frac{1}{2} \int_{\mathbb{R} \times \mathbb{T}} \left({|\nabla u (x,y,t)|}^2 - \frac{1}{3} {u(x,y,t)}^3\right)\,dxdy,
$$
$$
M[u](t)= \int_{\mathbb{R} \times \mathbb{T}} {u(x,y,t)}^2\,dxdy.
$$
These conservation laws identify $H^1(\mathbb{R}\times\mathbb{T})$ as the energy space.

For the Zakharov-Kuznetsov in the $\mathbb{R}^2$ setting, Faminskii \cite{Faminskii} proved the local well-posedness in the energy space $H^1(\mathbb{R}^2)$.
After this, Linares and Pastor \cite{Linares-Pastor} showed the local well-posedness for $s>3/4$, by proving a sharp maximal function estimates.
Gr\"{u}nrock and Herr \cite{Grunrock} proved local well-posedness in $H^s(\mathbb{R}^2)$ for $s > 1/2$, by using the Fourier restriction norm method and Strichartz estimates.
The Fourier restriction norm method utilizing the $X^{s,b}$ spaces was introduced by Bourgain \cite{Bourgain,Bourgain1} (also by Kenig, Ponce and Vega \cite{Kenig}), in order to prove the local well-posedness for nonlinear Schr\"odinger and KdV equations in low regularity Sobolev spaces.
At the same time, Molinet and Pilod \cite{Molinet} showed the local well-posedness in $H^s(\mathbb{R}^2)$ for $s > 1/2$ using bilinear Strichartz estimates.
Observing proofs of global well-posedness, Shan \cite{Shan} obtained that the solution exists globally in time to data in $H^s(\mathbb{R}^2)$ for $s>5/7$. 
Shan used the I-method based on $H^1$ conservation laws.
The I-method was introduced for global well-posedness of the KdV equation by Colliander, Keel, Staffilani, Takaoka and Tao \cite{Colliander}.
Recently, Kinoshita \cite{Kinoshita} showed the local well-posedness for $s>-1/4$,  using Loomis-Whitney inequality and an orthogonal decomposition technique.
In \cite{Kinoshita}, the local ill-posedness results for $s<-1/4$ was also obtained.
This means that the data-to-solution map from the unit ball in $H^s(\mathbb{R}^2)$ to $C([0,T]; H^s)$ fails to be smooth for any $T>0$.
As a corollary from the result of local well-posedness combining with the $L^2$ conservation law, the global well-posedness in $L^2(\mathbb{R}^2)$ follows.

In the $\mathbb{R} \times \mathbb{T}$ setting, Molinet and Pilod in the same paper as above \cite{Molinet} showed the global well-posedness in $H^s(\mathbb{R} \times \mathbb{T})$ for $s \ge 1$.
The strategy is similar to the case of $\mathbb{R}^2$ setting, however it is more difficult to make sure the proof because of the lack of the smoothing effects.
In \cite{Osawa}, we showed the local well-posedness for $s>9/10$ by refining the bilinear estimates. 

In the $\mathbb{T}^2$ setting, the local well-posedness result in $H^s(\mathbb{T}^2)$ has been obtained by Linares, Panthee, Robert and Tzvetkov \cite{Linares}.
They showed that the initial value problem is locally well-posed in $H^s(\mathbb{T}^2)$ for $s>5/3$.
The proof used short-time Strichartz estimates. 
In \cite{Schippa}, Schippa improved the local well-posedness to $s>3/2$ by using short-time bilinear Strichartz estimates.
Moreover, Kinoshita and Schippa \cite{Kinoshita2} obtained the local well-posedness for $s>1$.
They estimated the nonlinear interaction term by short-time trilinear estimates.

Observing related results on the Zakharov-Kuznetsov equation.
Yamazaki \cite{Yamazaki} showed the stability and instability results for line solitary waves of the Zakharov-Kuznetsov equations in $\mathbb{R} \times \mathbb{T}_L$, where $\mathbb{T}_L$ is the torus of length $2\pi L$.
The solitary waves are constructed by hyperbolic functions.
The long-time behavior stability of solitary waves were also studied in \cite{Bridges} and in \cite{Rousset}. 

We aim to prove that the initial value problem of the Zakharov-Kuznetsov equation in $H^s(\mathbb{R} \times \mathbb{T})$ is globally well-posed for some $s<1$. 
This proof is based on bilinear estimates developed in \cite{Molinet} and I-method \cite{Colliander}.

The main theorem of the paper is the following.
\begin{theorem}\label{thm:main}
The initial value problem of \eqref{ZK} is globally well-posed in $H^s(\mathbb{R} \times \mathbb{T})$ for $s>29/31$.
In other words, for any $u_0 \in H^s(\mathbb{R} \times \mathbb{T})$, for all $T>0$, there exists a unique solution $u$ of \eqref{ZK} such that
$$
u \in C([0,T] : H^s((\mathbb{R} \times \mathbb{T})) \cap X^{s, 1/2+}_{T}.
$$
Moreover, for all $0<T' < T$, there exists a neighborhood $\mathcal{U}$ of $u_0$ in $H^s(\mathbb{R} \times \mathbb{T})$ such that the data-to-solution map 
$$
\mathcal{U}\ni v_0 \mapsto v(t) \in C([0,T'] : H^s((\mathbb{R} \times \mathbb{T})) \cap X^{s, 1/2+}_{T'}
$$
is smooth, where $v(t)$ is a unique solution of \eqref{ZK} to the initial data $v_0$.
Here function space $X^{s,b}_T$ is defined in Section \ref{sec:pre}.
\end{theorem}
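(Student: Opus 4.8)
# Proof Proposal

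\textbf{Overall strategy.} The plan is to implement the I-method based on the energy conservation law $E[u]$. We introduce the smoothing operator $I = I_N$, a Fourier multiplier with a smooth symbol $m(\xi,\eta)$ that equals $1$ on frequencies $|(\xi,\eta)| \lesssim N$ and behaves like $N^{1-s}|(\xi,\eta)|^{s-1}$ for larger frequencies, so that $I$ maps $H^s$ into $H^1$ with $\|Iu\|_{H^1} \lesssim N^{1-s}\|u\|_{H^s}$. Since the local well-posedness theory of \cite{Osawa} (for $s > 9/10$) is built on bilinear estimates in $X^{s,b}$ spaces, the first task is to verify that those estimates are compatible with the modified nonlinearity, i.e. to prove an "$I$-version" bilinear estimate controlling $\partial_x I(uv)$ in terms of $\|Iu\|_{X^{1,1/2+}}$ and $\|Iv\|_{X^{1,1/2+}}$ with a constant independent of $N$. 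This follows from the extra smoothing/commutator gain coming from the symbol $m$, and it yields local existence on a time interval whose length depends only on $\|Iu_0\|_{H^1} \sim N^{1-s}\|u_0\|_{H^s}$.

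\textbf{Energy increment.} The heart of the argument is to estimate the growth of the modified energy $E[Iu](t)$ over one local time step. Since $Iu$ does not solve \eqref{ZK} exactly, $\frac{d}{dt}E[Iu]$ is not zero; differentiating and using the equation, the main contribution is a quadrilinear-type term measuring the failure of $I$ to commute with the nonlinearity, schematically
$$
E[Iu](T_0) - E[Iu](0) = \int_0^{T_0}\!\!\int_{\mathbb{R}\times\mathbb{T}} \big(I(u\partial_x u) - Iu\,\partial_x Iu\big)\,\Delta Iu \;dx\,dy\,dt + (\text{cubic terms}).
$$
One rewrites this multiplier expression on the Fourier side, exploits the cancellation $m(\xi_1+\xi_2+\xi_3) - \cdots$ near the diagonal (where all frequencies are comparable this is $O(N^{-2})$ smallness), and combines it with the $X^{s,b}$ bilinear estimates and $L^4$/bilinear Strichartz bounds available on $\mathbb{R}\times\mathbb{T}$ to obtain a bound of the form $N^{-\beta}\|Iu\|_{X^{1,1/2+}}^k$ for some $\beta > 0$ and $k\in\{3,4\}$. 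Optimizing the interpolation between the "high-high to low" and "high-low" interactions, together with the loss of smoothing on the cylinder, is what fixes the exponent $\beta$ and ultimately the threshold $s > 29/31$.

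\textbf{Iteration and conclusion.} With the almost-conservation bound in hand, one runs the standard bootstrap: rescale the data (or work directly and track constants), use $\|Iu_0\|_{H^1}\lesssim N^{1-s}$ and $M[u]$ conservation to control the cubic error term, and iterate the local theory $\sim T / T_0$ times. The energy stays $O(N^{2(1-s)})$ as long as the accumulated increment $\frac{T}{T_0}\cdot N^{-\beta}\cdot N^{C(1-s)}$ is $\lesssim N^{2(1-s)}$; since $T_0 \sim N^{-\gamma(1-s)}$ this gives a constraint $T \lesssim N^{\delta}$ with $\delta = \delta(s,\beta)>0$ precisely when $s > 29/31$, so $N$ can be chosen large enough to cover any prescribed $T$. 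This yields the a priori bound $\|u(T)\|_{H^s}\lesssim N^{1-s}\lesssim \langle T\rangle^{(1-s)/\delta}$, hence global existence; uniqueness and smoothness of the data-to-solution map on $[0,T']$ for $T'<T$ follow from the local theory applied on each subinterval together with the uniform-in-$N$ bilinear estimates.

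\textbf{Main obstacle.} The principal difficulty is the energy increment estimate on the cylinder: unlike the $\mathbb{R}^2$ case treated by Shan \cite{Shan}, there is no full dispersive smoothing in the $y$-variable, so the bilinear/Strichartz tools are weaker and one must squeeze the needed decay in $N$ out of the symbol cancellation and the anisotropic structure alone. Getting $\beta$ large enough — rather than merely positive — in the worst "high-high" interaction is exactly what pushes the threshold down only to $29/31$ and is where the bulk of the technical work lies.
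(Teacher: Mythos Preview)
Your outline is the same I-method scheme the paper follows, and the broad strokes---an $I$-bilinear estimate giving a modified local theory with lifespan governed by $\|Iu_0\|_{H^1}$, an almost-conservation law for $E[Iu]$, and iteration---are all correct. Two points in your sketch, however, diverge from what actually makes the argument close at $s>29/31$ and would need to be corrected.

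First, the rescaling is not optional. On $\mathbb{R}\times\mathbb{T}$ the bilinear estimate only produces a $T^{\varepsilon}$ gain in the Duhamel term, so if you ``work directly'' the local lifespan is $\delta\sim\|Iu_0\|_{H^1}^{-1/\varepsilon}\sim N^{-(1-s)/\varepsilon}$, far too short to iterate. The paper instead rescales by $u^{\lambda}(x,y,t)=\lambda^{-2}u(x/\lambda,y/\lambda,t/\lambda^3)$, which changes the spatial domain to $\mathbb{R}\times\mathbb{T}_{\lambda}$; with $\lambda\sim N^{(1-s)/(1+s)}$ one gets $\|Iu_0^{\lambda}\|_{H^1_{\lambda}}=o(1)$ and hence $\delta\sim 1$. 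The price is that every bilinear and almost-conservation estimate must be proved on $\mathbb{R}\times\mathbb{T}_{\lambda}$ with constants uniform in $\lambda\ge 1$, which is a genuine part of the work (Lemmas~\ref{lem:m3}--\ref{lem:bilinear}).

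Second, the source and size of the $N^{-\beta}$ gain are not what you describe. There is no $O(N^{-2})$ symbol cancellation ``near the diagonal'': in the high--high--high region the commutator symbol $m(\zeta_1)m(\zeta-\zeta_1)-m(\zeta)$ is only $O(m(\zeta))$, and the mean-value cancellation $\sim N_1/N_0$ helps only in the low--high case. The actual gain is $\beta=1/10$, and it comes from the slack already present in the $s>9/10$ bilinear estimate of \cite{Osawa}: each case of that proof carries an unused factor like $N^{-(1-s)}N_0^{-(s-9/10)}$, which absorbs exactly $N^{1/10}$ (Lemma~\ref{lem:ibilinear}). The iteration then reads $N^{1/10-}>\lambda^3 T\sim N^{3(1-s)/(1+s)}T$, i.e.\ $1/10>3(1-s)/(1+s)$, which is precisely $s>29/31$.
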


The paper is organized as follow.
In Section \ref{sec:pre}, we recall some harmonic analysis tools including Littlewood-Paley decompositions of functions.
One also reviews some preliminary results for linear estimates in the Bourgain spaces $X^{s,b}$ \cite{Bourgain,Bourgain1,Kenig}.
In Section \ref{sec:LWP}, we follow the I-method scheme \cite{Colliander}.
We give several lemmas of bilinear estimates in conjunction with rescaling argument in \cite{Colliander1} and give the local well-posedness results for rescaled data.
In Section \ref{sec:energy}, we define modified energy functional $E[Iu]$ in term of the $H^s$-norm of the solution for $s<1$.
Section \ref{sec:GWP} is devoted to the proof of main theorem.
One of the key steps in the construction of global solutions is to control the increment of the modified energy.
We estimate the growth of the modified energy and provide a priori estimates for the solutions. 

\section{Some notations of function spaces}\label{sec:pre}
\indent

In this section, we will introduce some function spaces that will be used throughout the paper.

We use the absolute value of $(\xi,q)$ as $|(\xi,q)|^2 = 3\xi^2 + q^2$ in this paper. 
For positive real number $a$ and $b$, the notion $a \lesssim b$ means that there is a constant $c$ such that $a \le cb$.
When $a \lesssim b$ and $b \lesssim a$, we write $a \sim b$.
Moreover, $b+$ means that there exists $\delta >0$ such that $b+\delta$.

Denote $\mathbb{T}_\lambda = \lambda \mathbb{T}=\mathbb{R}/(2\pi \lambda\mathbb{Z})$ for $\lambda>0$.
Let $f(x,y)$ be a function defined on $\mathbb{R} \times \mathbb{T}_{\lambda}$.
We denote the Fourier transform of $f$ with respect to the variables $(x,y)$ as $\mathcal{F}^{\lambda}_{xy}f (\xi, q)$:
$$
\mathcal{F}^{\lambda}_{xy}f(\xi,q) = \int_{\mathbb{R}}\! \int_{0}^{2 \pi\lambda} e^{-i(x\xi+ y q)} f(x,y)\,dy dx,
$$
where $(\xi,q) \in \mathbb{R}\times \mathbb{Z}/\lambda$.
Moreover, let $\mathcal{F}^{-1,\lambda}_{\xi q}$ be the Fourier inverse transform with respect to spacial variables by
$$
\mathcal{F}^{-1,\lambda}_{\xi q}f(x,y) = \frac{1}{(2\pi)^2\lambda} \sum_{q \in \mathbb{Z}/\lambda} \int_{\mathbb{R}} e^{i(\xi x+qy)} f(\xi,q) \,d \xi
$$
for $(x,y)\in\mathbb{R}\times\mathbb{T}_{\lambda}$.
For simplicity, we abbreviate $\mathcal{F}_{xy}^{\lambda}$ and $\mathcal{F}^{-1,\lambda}_{\xi q}$ by $\mathcal{F}^{\lambda}$ and $\mathcal{F}^{-1,\lambda}$, respectively, when no confusion is likely.

Let $u(x,y,t)$ be a function of $\mathbb{R} \times \mathbb{T}_{\lambda} \times \mathbb{R}$, and let $\widehat{u}^{\lambda} (\xi, q, \tau)$ be Fourier transform of $u(x, y, t)$ in a similar manner as well
$$
\widehat{u}^{\lambda} (\xi, q, \tau) = \int_{\mathbb{R}^2}\! \int_{0}^{2 \pi\lambda}  e^{-i(t\tau+ix\xi+y q)} u(x,y,t)\,dy dx dt.
$$
Similarly, define the Fourier inverse transform
$$
\widecheck{u}^{\lambda}(x,y,t) = \frac{1}{(2\pi)^3\lambda} \sum_{q \in \mathbb{Z}/\lambda} \int_{\mathbb{R}^2} e^{i (t \tau+\xi x+qy)}u(\xi,q,\tau)\, d \xi d \tau.
$$

Denote $\sigma(\xi,q)=\xi^3 + \xi q^2$.
Let $\eta \in C^{\infty}_0 (\mathbb{R})$ satisfy $0 \le \eta \le 1$, $ \eta_{[-1, 1]} =1$, and $\mathrm{supp}\,\eta \subset [-2, 2]$. 
For a dyadic number $N=2^k$ with $k \in \mathbb{N}$, we denote
$$
\phi(\xi) = \eta(\xi) - \eta(2\xi),\quad  \phi_{N} (\xi, q) = \phi(N^{-1} |(\xi, q)|),\quad
\psi_{N}(\xi, q,\tau) = \phi(N^{-1}(\tau - \sigma(\xi,q))),
$$
where $(\xi, q,\tau) \in \mathbb{R}\times \mathbb{Z}/\lambda\times\mathbb{R}$.
Here, we prepare notation of interval as $I_N=\mathrm{supp}\,\phi_N$ for a dyadic number $N$.
We define the Littlewood-Paley decomposition as
$$
P_Nu= \mathcal{F}^{-1,\lambda} (\phi_N \mathcal{F}^{\lambda}u),
\quad Q_Lu = (\psi_L \widehat{u}^{\lambda})\widecheck{~}^{\lambda}.
$$ 
We let denote for $a, b \in \mathbb{R},~a \wedge b = \min\{a,b\}$ and $a \vee b = \max\{a,b\}$.

For $s \in \mathbb{R}$, we define the Sobolev spaces $H^s_{\lambda}=H^s_{\lambda}(\mathbb{R}\times\mathbb{T}_{\lambda})$ equipped with the norm
\begin{equation}\label{eq:Plancherel}
\| f \|_{H^s_{\lambda}} = \left(\frac{1}{2\pi\lambda}\sum_{q \in \mathbb{Z}/\lambda} \int_{\mathbb{R}} \langle |(\xi, q)| \rangle^{2s} | \mathcal{F}^{\lambda} f(\xi,q)|^2 \,d\xi\right)^{1/2},
\end{equation}
where  $\langle x \rangle = 1 + |x|$.
We also use $L_{\lambda}^2=H^0_{\lambda}$.

Use the restriction operator $R_K$ with respect to the $x$ variable as
$$
R_K f (x) = \int_{\mathbb{R}} \phi (\xi K^{-1}) \mathcal{F}_x f (\xi) e^{ix\xi} \,d\xi
$$
for dyadic number $K$, where $\mathcal{F}_x$ is the Fourier transform with respect to the $x$ variable.

We note that the following properties hold \cite{Colliander1}:
$$
\int_{\mathbb{R}}\!\int_0^{2\pi\lambda}f(x,y)\overline{g(x,y)}\,dxdy=\frac{1}{\lambda}\sum_{q\in\mathbb{Z}/\lambda}\int_{\mathbb{R}}\mathcal{F}^{\lambda}f(\xi,q)\overline{\mathcal{F}^{\lambda}g(\xi,q)}\,d\xi,
$$
\begin{equation}\label{eq:convolution}
\mathcal{F}^{\lambda}(fg)(\xi,q)=\left(\mathcal{F}^{\lambda}f*^{\lambda}\mathcal{F}^{\lambda}g\right)(\xi,q)=\frac{1}{\lambda}\sum_{q_1\in\mathbb{Z}/\lambda}\int_{\mathbb{R}}\mathcal{F}^{\lambda}f(\xi-\xi_1,q-q_1)\mathcal{F}^{\lambda}g(\xi_1,q_1)\,d\xi_1.
\end{equation}

Next, we describe the Bourgain space via the Fourier transform.
Following \cite{Kenig}, we introduce a class of function spaces related to Bourgain space $X^{s,b}_{\lambda}$.
Define the Bourgain space $X^{s,b}_{\lambda}$ as follows.

\begin{Def}
For $s,b \in \mathbb{R}$ and $T>0$,
\begin{equation}\label{eq:Bourgain}
\| u \|_{X^{s,b}_{\lambda}} = \left(\frac{1}{\lambda}\sum_{q\in\mathbb{Z}/\lambda}\int_{\mathbb{R}^2}\langle \tau - \sigma(\xi ,q) \rangle^{2b}  \langle |(\xi ,q)| \rangle^{2s} |\widehat{u}(\xi ,q,\tau)|^2\,d\xi d\tau\right)^{1/2},
\end{equation}
$$
\| u \|_{X^{s,b}_{\lambda,T}} =\inf\left\{\| \overline{u} \|_{X^{s,b}_{\lambda}} \mid \overline{u} : \mathbb{R} \times \mathbb{T}_{\lambda}\rightarrow \mathbb{C},~\overline{u} |_{\mathbb{R} \times \mathbb{T}_{\lambda} \times [0,T]} = u \right\}.
$$ 
When a norm of $u$ introduced in \eqref{eq:Bourgain} is bounded, we denote $u \in X^{s,b}_{\lambda}$.
We also use the same notation $L^2_{\lambda}=X^{0,0}_{\lambda}$ for functions on $\mathbb{R}\times\mathbb{T}_{\lambda}\times\mathbb{R}$ as defined before for one on $\mathbb{R}\times\mathbb{T}_{\lambda}$, if there is no confusion. 
\end{Def}

The space $X^{s,b}_{\lambda,T}$ will be used for proof of the local well-posedness result, that is under restriction of time on $[0,T]$.

If $\lambda=1$, we denote $\mathcal{F}^{\lambda},~\mathcal{F}^{-1,\lambda},~\widehat{\cdot}^{\lambda},~ \widecheck{\cdot}^{\lambda},~H^s_{\lambda},~X_{\lambda}^{s,b},~X_{\lambda,T}^{s,b}$ by $\mathcal{F},~\mathcal{F}^{-1},~\widehat{\cdot},~ \widecheck{\cdot},~H^s,~X^{s,b},~X_{T}^{s,b}$, respectively.

\begin{Remark}
The space $X^{s,b}_{\lambda}$ will be characterized by this formula
$$
\| f \|_{X^{s,b}_{\lambda}}= \| e^{t \partial_x \Delta} f \|_{H^{b}_{t} H^{s}_{\lambda}},
$$
where $e^{-t \partial_x \Delta}$ is the operator associated linear Zakharov-Kuznetsov equation on $\mathbb{R}\times\mathbb{T}_{\lambda}$, described by
$$
\mathcal{F}^{\lambda}(e^{-t \partial_x \Delta} f) (\xi, q) = e^{it\sigma(\xi,q)} \mathcal{F}^{\lambda}f(\xi, q).
$$
\end{Remark}

We summarize some formulas which were stated in \cite{Kenig,Molinet3,Molinet, Osawa}. 
\begin{lemma}\label{lem:bourgain1}
Let $s \in \mathbb{R}$ and $b > 1/2$.
Then
$$
\| \eta(t) e^{-t \partial_x \Delta} u \|_{X^{s,b}_{\lambda}} \lesssim \| f \|_{H^s_{\lambda}},
$$
for all $f \in H^s_{\lambda}$.
\end{lemma}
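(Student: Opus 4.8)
The plan is to reduce everything to the explicit action of the linear propagator on the Fourier side and then integrate. First I would compute the space--time Fourier transform of $\eta(t)e^{-t\partial_x\Delta}f$. Using $\mathcal{F}^{\lambda}(e^{-t\partial_x\Delta}f)(\xi,q)=e^{it\sigma(\xi,q)}\mathcal{F}^{\lambda}f(\xi,q)$ from the Remark, multiplying by $\eta(t)$ and taking the Fourier transform in $t$ yields
$$
\widehat{\eta(t)e^{-t\partial_x\Delta}f}^{\lambda}(\xi,q,\tau)=\widehat{\eta}(\tau-\sigma(\xi,q))\,\mathcal{F}^{\lambda}f(\xi,q),
$$
where $\widehat{\eta}$ denotes the one--dimensional Fourier transform of $\eta$ in the time variable.

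Next I would insert this into the definition \eqref{eq:Bourgain} of the $X^{s,b}_{\lambda}$ norm. The weight $\langle|(\xi,q)|\rangle^{2s}$ and the factor $|\mathcal{F}^{\lambda}f(\xi,q)|^2$ do not involve $\tau$, so for each fixed $(\xi,q)$ the change of variables $\mu=\tau-\sigma(\xi,q)$ decouples the $\tau$--integral:
$$
\|\eta(t)e^{-t\partial_x\Delta}f\|_{X^{s,b}_{\lambda}}^2=\Big(\int_{\mathbb{R}}\langle\mu\rangle^{2b}|\widehat{\eta}(\mu)|^2\,d\mu\Big)\cdot\frac{1}{\lambda}\sum_{q\in\mathbb{Z}/\lambda}\int_{\mathbb{R}}\langle|(\xi,q)|\rangle^{2s}|\mathcal{F}^{\lambda}f(\xi,q)|^2\,d\xi=\|\eta\|_{H^b(\mathbb{R})}^2\,\|f\|_{H^s_{\lambda}}^2.
$$
Since $\eta\in C^{\infty}_0(\mathbb{R})$, its Fourier transform is Schwartz, so $\langle\mu\rangle^{2b}|\widehat{\eta}(\mu)|^2$ is integrable for every $b\in\mathbb{R}$; in particular $\|\eta\|_{H^b(\mathbb{R})}<\infty$, and this is precisely the constant absorbed by the implicit constant in the statement. (The hypothesis $b>1/2$ is not actually needed for this particular estimate, but it is harmless and matches the way the lemma is invoked later.)

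Alternatively, and essentially equivalently, I could argue directly through the characterization in the Remark: applying $e^{t\partial_x\Delta}$ to $\eta(t)e^{-t\partial_x\Delta}f$ multiplies the Fourier coefficient at time $t$ by $e^{-it\sigma(\xi,q)}$, which cancels the $e^{it\sigma(\xi,q)}$ produced by the propagator and leaves $\eta(t)f$; hence $\|\eta(t)e^{-t\partial_x\Delta}f\|_{X^{s,b}_{\lambda}}=\|\eta(t)f\|_{H^{b}_{t}H^{s}_{\lambda}}$. Because $f$ does not depend on $t$, the $t$--Fourier transform of $\eta(t)f$ is $\widehat{\eta}(\tau)f$, and one reads off $\|\eta(t)f\|_{H^{b}_{t}H^{s}_{\lambda}}=\|\eta\|_{H^b(\mathbb{R})}\|f\|_{H^s_{\lambda}}$. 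There is no genuine obstacle here; the only points requiring care are the $\lambda$--dependent normalization of $\mathcal{F}^{\lambda}$ (identical on both sides, hence irrelevant) and the minor typo that the $u$ on the left-hand side of the displayed inequality in the statement should read $f$.
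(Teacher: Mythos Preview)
Your proof is correct and is exactly the standard argument for this estimate. The paper itself does not prove the lemma at all; it merely cites \cite{Kenig,Molinet3,Molinet,Osawa} and states the result, so there is nothing to compare against beyond noting that your computation is the one found in those references.
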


\begin{lemma}\label{lem:bourgain2}
Let $s \in \mathbb{R}$ and $b>1/2$.
Then 
$$
\left \| \eta(t) \int^t_{0} e^{-(t - t') \partial_x \Delta} f(t') dt' \right \|_{X^{s,b}_{\lambda}} \lesssim \| f \|_{X^{s , b-1}_{\lambda}},
$$
for all $f \in X^{s, b-1}_{\lambda}$.
\end{lemma}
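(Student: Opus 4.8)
The plan is to reduce the estimate to a one-dimensional, frequency-localized statement by passing to the Fourier side and exploiting the characterization $\|f\|_{X^{s,b}_\lambda}=\|e^{t\partial_x\Delta}f\|_{H^b_tH^s_\lambda}$ from the Remark. First I would write $v(t)=\eta(t)\int_0^t e^{-(t-t')\partial_x\Delta}f(t')\,dt'$ and apply the spatial Fourier transform $\mathcal{F}^\lambda$, which turns the linear propagator into the scalar multiplier $e^{it\sigma(\xi,q)}$; the phase $\sigma(\xi,q)=\xi^3+\xi q^2$ plays no structural role beyond being a fixed real number for each $(\xi,q)$, so the problem decouples over $(\xi,q)$. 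After the substitution $\widehat{v}(\xi,q,\tau)\mapsto$ its value in terms of $\widehat f$, the $\langle|(\xi,q)|\rangle^{2s}$ weights on both sides match and cancel, so it suffices to prove the scalar estimate
$$
\left\| \eta(t)\int_0^t e^{i(t-t')\mu}g(t')\,dt' \right\|_{H^b_t} \lesssim \| e^{-t\mu} \text{(extension of }g) \|_{H^{b-1}_t}
$$
uniformly in the real parameter $\mu$ (here $\mu=\sigma(\xi,q)$), i.e. the classical time-only Duhamel lemma for the Bourgain norm.

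Next I would prove this scalar lemma following the standard argument (Kenig–Ponce–Vega, Molinet–Pilod, or Osawa, all cited in the excerpt just before the statement). Writing $w(t)=e^{-it\mu}g(t)$ and expanding $e^{i(t-t')\mu}$, one has $v(t)=\eta(t)e^{it\mu}\int_0^t w(t')\,dt'$, so the weight $\langle\tau-\mu\rangle^b$ in $X^{s,b}_\lambda$ becomes an ordinary $\langle\tau\rangle^b$ weight acting on $\eta(t)\int_0^t w$. The task is then the fixed, $\mu$-independent inequality
$$
\left\| \eta(t)\int_0^t w(t')\,dt' \right\|_{H^b_t} \lesssim \| w \|_{H^{b-1}_t}.
$$
I would handle this by the usual device of splitting the time integral according to whether the Fourier variable $\tau'$ of $w$ is large or small: on $|\tau'|\gtrsim 1$ one integrates by parts (or writes $\int_0^t w = $ a boundary term plus $\int_0^t$ of the primitive), gaining the factor $\langle\tau'\rangle^{-1}$ that upgrades $H^{b-1}$ to $H^b$ and using that $b>1/2$ makes $H^b_t$ an algebra so multiplication by $\eta$ is bounded; on $|\tau'|\lesssim 1$ one uses that $\int_0^t w\,dt'$ has at most linear growth, so $\eta(t)\int_0^t w$ is controlled in any $H^b_t$ by $\|w\|_{L^1}\lesssim\|w\|_{H^{b-1}}$ on that piece, again because $b-1>-1/2$. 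Summing the two contributions and taking the infimum over extensions of $f$ gives the claimed bound.

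The only genuine subtlety — and the step I expect to require the most care — is the low-frequency piece $|\tau'|\lesssim 1$, where one cannot integrate by parts and must instead exploit the compact support of $\eta$ together with the endpoint arithmetic $b-1>-1/2 \Leftrightarrow b>1/2$; this is exactly where the hypothesis $b>1/2$ is used (and cannot be relaxed without extra input). Everything else is bookkeeping: the reduction to the scalar multiplier, the cancellation of the spatial weights, and the passage from the scalar $H^b_t$ inequality back to $X^{s,b}_\lambda$ via Plancherel in $(\xi,q)$. Since the statement is explicitly quoted from \cite{Kenig,Molinet3,Molinet,Osawa}, in the write-up I would either give this short argument or simply cite those sources.
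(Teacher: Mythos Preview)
Your proposal is correct and follows the standard argument; the paper itself does not supply a proof of this lemma but merely records it among the ``formulas which were stated in \cite{Kenig,Molinet3,Molinet,Osawa}'', so your sketch is precisely what a reader consulting those references would find. One small technical remark: in the low-frequency piece $|\tau'|\lesssim 1$, the function $w$ (being merely band-limited) need not lie in $L^1_t$, so the bound is more cleanly obtained by Taylor-expanding $(e^{it\tau'}-1)/(i\tau')=\sum_{k\ge 1}(it)^k(\tau')^{k-1}/k!$ and estimating each coefficient $\int_{|\tau'|\le 1}(\tau')^{k-1}\widehat w(\tau')\,d\tau'$ by Cauchy--Schwarz; this yields the same conclusion and is the version in \cite{Kenig,Ginibre}.
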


\begin{lemma}\label{lem:bourgain3}
For any $T>0$, $s \in \mathbb{R}$ and for all $-1/2 < b' \le b < 1/2$,
$$
\| f \|_{X^{s,b'}_{\lambda,T}} \lesssim T^{b-b'} \| f \|_{X^{s,b}_{\lambda,T}},
$$
for all $f \in X^{s, b}_{\lambda}$.
\end{lemma}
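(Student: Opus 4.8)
The plan is to normalize the problem down to a one–dimensional multiplier estimate in the time variable and then to prove that estimate by interpolating between two extremal bounds. First I would make the standard reductions. Multiplication by the Fourier multiplier $\langle|(\xi,q)|\rangle^{s}$ in the $(x,y)$ variables is an isometric isomorphism of $X^{s,b}_{\lambda}$ onto $X^{0,b}_{\lambda}$ which commutes with restriction to $t\in[0,T]$ and with multiplication by functions of $t$, so one may take $s=0$. If $T\ge1$ the inequality is trivial, since any extension realizing $\|u\|_{X^{0,b}_{\lambda,T}}$ also realizes a no larger $X^{0,b'}_{\lambda,T}$–norm (as $b'\le b$) while $T^{b-b'}\ge1$; thus assume $0<T\le1$, and also $b'<b$ (the case $b'=b$ being vacuous). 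Pick an extension $\bar u$ of $u$ with $\|\bar u\|_{X^{0,b}_{\lambda}}\le2\|u\|_{X^{0,b}_{\lambda,T}}$. Since $\eta(t/T)\equiv1$ on $[-T,T]\supset[0,T]$, the function $\eta(\cdot/T)\bar u$ is again an extension of $u$, so $\|u\|_{X^{0,b'}_{\lambda,T}}\le\|\eta(\cdot/T)\bar u\|_{X^{0,b'}_{\lambda}}$. By the Remark, $\|g\|_{X^{0,\beta}_{\lambda}}=\|e^{t\partial_x\Delta}g\|_{H^{\beta}_{t}L^2_{\lambda}}$, and the free twist $e^{t\partial_x\Delta}$, acting slicewise in $t$, commutes with multiplication by the scalar $\eta(t/T)$; writing $w:=e^{t\partial_x\Delta}\bar u$ (so $\|w\|_{H^{b}_tL^2_{\lambda}}=\|\bar u\|_{X^{0,b}_{\lambda}}$), the claim becomes $\|\eta(\cdot/T)w\|_{H^{b'}_tL^2_{\lambda}}\lesssim T^{b-b'}\|w\|_{H^{b}_tL^2_{\lambda}}$. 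Finally, multiplication by $\eta(\cdot/T)$ acts only in $t$, so on the Hilbert tensor product $H^{\beta}_t(\mathbb{R})\otimes L^2_{\lambda}$ its operator norm equals its norm as a scalar operator $H^{b}(\mathbb{R})\to H^{b'}(\mathbb{R})$. It therefore suffices to prove
$$
\|\eta(\cdot/T)h\|_{H^{b'}(\mathbb{R})}\lesssim T^{b-b'}\|h\|_{H^{b}(\mathbb{R})},\qquad 0<T\le1,\ -\tfrac12<b'<b<\tfrac12.
$$

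For the scalar estimate, write $\eta_T:=\eta(\cdot/T)$ and let $M_{\eta_T}$ be multiplication by $\eta_T$. I would first prove, uniformly for $0<T\le1$, the two building blocks
$$
\text{(A)}\quad \|M_{\eta_T}\|_{H^{\beta}\to H^{\beta}}\lesssim1\quad(|\beta|<\tfrac12),
\qquad\qquad
\text{(B)}\quad \|M_{\eta_T}\|_{H^{\beta}\to L^2}\lesssim T^{\beta}\quad(0\le\beta<\tfrac12).
$$
Here (B) is Hölder followed by Sobolev embedding: for $r=1/\beta$ and $r'=2/(1-2\beta)$ one has $1/r+1/r'=1/2$, $\|\eta_T\|_{L^{r}}=T^{\beta}\|\eta\|_{L^{r}}$, and $H^{\beta}(\mathbb{R})\hookrightarrow L^{r'}(\mathbb{R})$, whence $\|\eta_T h\|_{L^2}\le\|\eta_T\|_{L^{r}}\|h\|_{L^{r'}}\lesssim T^{\beta}\|h\|_{H^{\beta}}$ (with $\beta=0$ immediate). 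For (A) with $0\le\beta<1/2$ I would use the fractional Leibniz (Kato--Ponce) rule together with $H^{\beta}(\mathbb{R})\hookrightarrow L^{2/(1-2\beta)}(\mathbb{R})$, the scaling identity $\||D|^{\beta}\eta_T\|_{L^{1/\beta}}=\||D|^{\beta}\eta\|_{L^{1/\beta}}$, and $\|\eta_T\|_{L^{\infty}}=\|\eta\|_{L^{\infty}}$; combined with $\|\eta_T h\|_{L^2}\le\|\eta\|_{\infty}\|h\|_{L^2}$ this gives (A), and the range $-1/2<\beta<0$ then follows by duality since $M_{\eta_T}$ is self-adjoint.

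With (A) and (B) available, the scalar estimate follows in three cases. If $0\le b'<b<1/2$, complex interpolation between $M_{\eta_T}\colon H^{b}\to H^{b}$ (norm $\lesssim1$) and $M_{\eta_T}\colon H^{b}\to L^2$ (norm $\lesssim T^{b}$), at the parameter $\theta=(b-b')/b$ so that $[H^{b},L^2]_{\theta}=H^{b'}$, yields $\|M_{\eta_T}\|_{H^{b}\to H^{b'}}\lesssim T^{b\theta}=T^{b-b'}$. If $-1/2<b'<b\le0$, duality reduces the bound to $\|M_{\eta_T}\|_{H^{-b'}\to H^{-b}}\lesssim T^{(-b')-(-b)}$, which is the previous case since $0\le-b<-b'<1/2$. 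If $-1/2<b'<0<b<1/2$, choose a fattened cutoff $\tilde\eta\in C^{\infty}_{0}$ with $\tilde\eta\equiv1$ on $\mathrm{supp}\,\eta$, so that $\eta_T=\eta_T\tilde\eta_T$ with $\tilde\eta_T:=\tilde\eta(\cdot/T)$; then
$$
\|M_{\eta_T}\|_{H^{b}\to H^{b'}}\le\|M_{\eta_T}\|_{L^2\to H^{b'}}\,\|M_{\tilde\eta_T}\|_{H^{b}\to L^2}\lesssim T^{-b'}\cdot T^{b}=T^{b-b'},
$$
the first factor from the case $b'<b\le0$ applied with exponents $(0,b')$ and the second from (B) for $\tilde\eta$. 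Unwinding the reductions then gives the lemma.

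The main obstacle is that the whole argument is sharp at the threshold $|b|,|b'|<1/2$: the constants in (A) and (B) degenerate as the regularity tends to $1/2$ (the embedding $H^{\beta}\hookrightarrow L^{2/(1-2\beta)}$ blows up, and (A) itself fails for $\beta\ge1/2$), so the hypothesis $b,b'<1/2$ cannot be dispensed with here. Concretely, the delicate regime is the low--modulation part $|\tau|\lesssim T^{-1}$ of $h$, on which $h$ varies on a scale coarser than the width $T$ of the cutoff; this is exactly where the Hölder and Sobolev inputs are saturated, and where a naive scaling argument or a soft bound would fall short. The remaining ingredients — the reductions, the duality, and the interpolation — are routine.
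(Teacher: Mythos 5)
Your proof is correct. Note that the paper itself does not prove Lemma \ref{lem:bourgain3} at all: it is quoted as a standard fact with a citation to \cite{Kenig,Molinet3,Molinet,Osawa}, and your argument — reducing by a near-optimal extension and the identity $\|f\|_{X^{s,b}_{\lambda}}=\|e^{t\partial_x\Delta}f\|_{H^b_tH^s_{\lambda}}$ to the scalar estimate $\|\eta(\cdot/T)h\|_{H^{b'}(\mathbb{R})}\lesssim T^{b-b'}\|h\|_{H^b(\mathbb{R})}$, and then proving that by the uniform $H^{\beta}$-multiplier bound for $|\beta|<1/2$, the H\"older/Sobolev bound $H^{\beta}\to L^2$ with gain $T^{\beta}$, interpolation, duality, and the cutoff factorization for the mixed-sign case — is exactly the standard proof behind the cited lemma, with the correct treatment of the cases $T\ge 1$, $b'=b$, and the three sign configurations of $(b',b)$.
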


The polynomial $\sigma(\xi, q)$ which appeared in \eqref{eq:Bourgain} plays an important role in characterizing of solution.
Now, we define the resonance function
\begin{equation}\label{eq:res}
\mathcal{H} (\xi_1, \xi_2, q_1, q_2) = \sigma(\xi_1 + \xi_2, q_1 + q_2) - \sigma(\xi_1, q_1) - \sigma(\xi_2, q_2) = 3\xi_1 \xi_2 (\xi_1 + \xi_2) + \xi_2 q^2_1 + \xi_1 q^2_2 + 2(\xi_1 + \xi_2) q_1 q_2,
\end{equation}
which plays an important role in the control of the frequency instability range between nonlinear interactions.

\section{Rescaled solutions}\label{sec:LWP}
\indent

Throughout the paper, assume $\lambda>1$.
Recall $\mathbb{T}_\lambda = \lambda \mathbb{T}=\mathbb{R}/(2\pi \lambda\mathbb{Z})$.
By rescaling
\begin{equation}\label{eq:scaling}
u^{\lambda}(x,y,t)=\frac{1}{\lambda^2}u\left(\frac{x}{\lambda},\frac{y}{\lambda},\frac{t}{\lambda^3}\right),
\end{equation}
we consider the Cauchy problem for the Zakharov-Kuznetsov equation on $\mathbb{R}\times\mathbb{T}_{\lambda}$:
\begin{equation}\label{eq:lambdaZK}
\begin{cases}
& \partial_t u^{\lambda} +\partial_x \Delta u^{\lambda} + u^{\lambda}\partial_x u^{\lambda} = 0,\\ 
& u^{\lambda}(x,y,0) = u_0^{\lambda} (x,y)\in H^s_{\lambda},
\end{cases}
\quad (x,y) \in \mathbb{R} \times \mathbb{T}_{\lambda}, \quad t \in \mathbb{R}.
\end{equation}
If we construct the solution $u^{\lambda}(t)$ of \eqref{eq:lambdaZK} on the time interval $[0,\lambda^3 T]$, we have the solution $u(t)$ of \eqref{ZK} on $[0,T]$.
  
Let $\zeta$ be combination of spatial variables as $\zeta = (\xi, q) \in \mathbb{R} \times \mathbb{Z}/\lambda$.
Define the Fourier multiplier operator $I$, which was originally introduced in \cite{Colliander} to consider global well-posedness for KdV equation.
For $N\in 2^{\mathbb{N}}$, define the operator $I$ by
$$
\mathcal{F}^{\lambda}If(\zeta) = m(\zeta) \mathcal{F}^{\lambda}f(\zeta),$$
where $m$ is a smooth, radially symmetric, non-increasing function satisfying 
\begin{equation*}
m(\zeta) =
\begin{cases}
1, & (|\zeta| \le N), \\
\left(\frac{|\zeta|}{N}\right)^{s-1}, &  (|\zeta| \ge 2N). 
\end{cases}
\end{equation*}
On the low frequency part $|\zeta| \le N$, the operator $I$ is the identity operator, while on the high frequency, $I$ is regarded as the integral operator.
Remark that $I$ maps $H^s_{\lambda}$ functions to $H^1_{\lambda}$ one. 

In this section, we prove the following local well-posedness results on $I^{-1}H^1_{\lambda}$.

\begin{Prop}[A variant of local well-posedness]\label{prop:I-local}
Let $s>9/10$.
The Cauchy problem \eqref{eq:lambdaZK} is locally well-posed in $H^s_{\lambda}$ for data $u_0^{\lambda}$ satisfying $u_0^{\lambda}\in H^s_{\lambda}$.
Moreover, the solution exists on a time interval $[0,\delta]$ with $\delta\sim \| Iu_0^{\lambda}\|_{H^1_{\lambda}}$, and the solution $u^{\lambda}(t)$ satisfies the estimate
$$
\left\| Iu^{\lambda}\right\|_{X^{1,1/2+}_{\lambda,\delta}}\lesssim \left\|Iu_0^{\lambda}\right\|_{H^1_{\lambda}}.
$$
\end{Prop}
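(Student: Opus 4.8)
\emph{Plan of proof.} The plan is to run a Picard iteration for the $I$-regularized unknown $v:=Iu^{\lambda}$ in the Bourgain space $X^{1,1/2+}_{\lambda,\delta}$, following the I-method scheme of \cite{Colliander} and reducing everything to a bilinear estimate on $\mathbb{R}\times\mathbb{T}_{\lambda}$ in which, up to a harmless loss, the operator $I$ behaves like the identity. Since $I$ is a Fourier multiplier in the spatial variables it commutes with $\partial_t+\partial_x\Delta$, so \eqref{eq:lambdaZK} is equivalent on $[0,\delta]$ to the Duhamel equation
$$
v(t)=\eta\!\left(\tfrac{t}{\delta}\right)e^{-t\partial_x\Delta}Iu_0^{\lambda}-\tfrac12\,\eta\!\left(\tfrac{t}{\delta}\right)\int_0^t e^{-(t-t')\partial_x\Delta}\,\partial_x I\big((u^{\lambda})^2\big)(t')\,dt',
$$
whose right-hand side in fact depends on $v$ alone, since the bilinear estimate below will bound the nonlinearity by $\|Iu^{\lambda}\|^2=\|v\|^2$.

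By Lemmas \ref{lem:bourgain1} and \ref{lem:bourgain2} (with $b=1/2+$) together with the time-localization gain of Lemma \ref{lem:bourgain3}, one obtains
$$
\|v\|_{X^{1,1/2+}_{\lambda,\delta}}\lesssim \|Iu_0^{\lambda}\|_{H^1_{\lambda}}+\delta^{\theta}\,\big\|\partial_x I\big((u^{\lambda})^2\big)\big\|_{X^{1,-1/2+}_{\lambda}}
$$
for some $\theta>0$, so everything reduces to the bilinear estimate
$$
\big\|\partial_x I(fg)\big\|_{X^{1,-1/2+}_{\lambda}}\lesssim \|If\|_{X^{1,1/2+}_{\lambda}}\,\|Ig\|_{X^{1,1/2+}_{\lambda}}
$$
with constant uniform in $\lambda>1$. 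Granting this, the Duhamel map is a contraction on the ball of radius $\sim\|Iu_0^{\lambda}\|_{H^1_{\lambda}}$ in $X^{1,1/2+}_{\lambda,\delta}$ once $\delta$ is taken as in the statement, the difference estimate being the same bilinear bound applied to $f=v_1-v_2$, $g=v_1+v_2$. Uniqueness, persistence of $Iu^{\lambda}$ in $C([0,\delta]:H^1_{\lambda})$ (hence of $u^{\lambda}$ in $C([0,\delta]:H^s_{\lambda})$, since $I^{-1}$ maps $H^1_{\lambda}$ to $H^s_{\lambda}$), and the smoothness — indeed analyticity — of the data-to-solution map then follow from the Banach fixed point theorem, and the claimed bound on $\|Iu^{\lambda}\|_{X^{1,1/2+}_{\lambda,\delta}}$ is read off from the fixed point.

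The core of the argument, and the place where $s>9/10$ enters, is the bilinear estimate carrying the $I$-operator. I would decompose $f$, $g$ and the output into Littlewood--Paley pieces $P_{N_1}$, $P_{N_2}$, $P_N$ and compare $m(\zeta)$, with $\zeta=\zeta_1+\zeta_2$, against $m(\zeta_1)m(\zeta_2)$. Since $m$ is non-increasing and $|\zeta|^{\,1-s}m(\zeta)\sim 1$, in all interactions with $N\sim\max(N_1,N_2)$ — which covers every high--low and low--high case — one has $m(\zeta)\lesssim \min\{m(\zeta_1),m(\zeta_2)\}$, so the $I$-weighted bilinear form is pointwise dominated on the Fourier side by the unweighted $X^{1,\cdot}$-bilinear form of $If$ and $Ig$; there the desired inequality is exactly the $s=1$ bilinear estimate for \eqref{eq:lambdaZK} on $\mathbb{R}\times\mathbb{T}_{\lambda}$ (the $\lambda$-uniform refinement of \cite{Molinet} obtained along the lines of \cite{Osawa}). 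The only genuinely problematic region is the high--high to low interaction $N_1\sim N_2\gg N$, where $m(\zeta)$ can be as large as $1$ and this pointwise domination fails; there I would exploit the extra derivative $\partial_x$ together with the lower bound on the resonance function $\mathcal{H}$ from \eqref{eq:res}, which forces one of the three modulations to be large, and trade that modulation gain against the derivative loss. A bookkeeping of the dyadic exponents shows that the balance closes precisely for $s>9/10$, with constants independent of $\lambda$.

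The step I expect to be the main obstacle is exactly this high--high to low estimate: on the cylinder $\mathbb{R}\times\mathbb{T}_{\lambda}$ one lacks the smoothing effects available on $\mathbb{R}^2$, so the only available gain comes from the resonance identity \eqref{eq:res}, and it must be carried out so that the resulting bilinear bound is uniform in $\lambda>1$ — this uniformity being what lets the later I-method argument survive the limit $\lambda\to\infty$. This is precisely the analysis of \cite{Osawa}, transplanted to the $\lambda$-scaled setting and combined with the monotonicity of $m$.
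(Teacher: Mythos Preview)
Your overall scheme is exactly the paper's: a contraction on the ball $\{\|Iu^\lambda\|_{X^{1,1/2+}_{\lambda,\delta}}\le 2C\|Iu_0^\lambda\|_{H^1_\lambda}\}$ via Lemmas~\ref{lem:bourgain1}--\ref{lem:bourgain3}, reducing everything to the bilinear bound $\|\partial_x I(uv)\|_{X^{1,-1/2+}_\lambda}\lesssim \|Iu\|_{X^{1,1/2+}_\lambda}\|Iv\|_{X^{1,1/2+}_\lambda}$, which is precisely Lemma~\ref{prop:I-b}.

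However, your sketch of that bilinear estimate misidentifies the difficult region. The claim that $m(\zeta)\lesssim\min\{m(\zeta_1),m(\zeta_2)\}$ yields pointwise domination by the $s=1$ bilinear form is not correct: what is needed is $m(\zeta)\lesssim m(\zeta_1)m(\zeta_2)$, and this fails not only in high--high $\to$ low but also in high--high $\to$ high, where $N_0\sim N_1\sim N_2\gg N$ gives $m(\zeta)/(m(\zeta_1)m(\zeta_2))\sim(N_0/N)^{1-s}\gg1$. In fact the paper's proof shows that the threshold $s>9/10$ arises \emph{entirely} from this high--high $\to$ high regime (the high--high $\to$ low case closes already for $s>1/2$): it is handled there by a five-way splitting according to the sizes of $|\xi|,|\xi_1|,|\xi-\xi_1|$ and of $\bigl||\zeta_i|^2-|\zeta_j|^2\bigr|$ relative to $N_0^{6/5}$, invoking the resonance function $\mathcal{H}$ in each subcase. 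Your instinct that the resonance identity \eqref{eq:res} is the key tool is right, but it must be deployed in the comparable-frequency regime $N_0\sim N_1\sim N_2$, not in high--high $\to$ low.
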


We give several lemmas in conjunction with rescaling argument in \cite{Colliander1}.

\begin{lemma}[Lemma 3.7 in \cite{Molinet}]\label{lem:m1}
Denote a set $\Lambda \subset \mathbb{R} \times \mathbb{Z}/\lambda$.
Let the projection on the $q$ be axis contained in a set $I \subset \mathbb{Z}/\lambda$.
Assume that there is a positive constant $C$ such that for any fixed $q_0 \in I \cap \mathbb{Z}/\lambda$ and $|\Lambda \cap \{ (\xi, q_0) \mid q_0 \in \mathbb{Z}/\lambda \} | \le C$.
Then, $|\Lambda| \le \lambda C (|I|+1)$.
\end{lemma}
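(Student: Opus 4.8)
The argument is a one–dimensional slicing of $\Lambda$ combined with an elementary lattice–point count. First I would, for each $q_0\in\mathbb{Z}/\lambda$, introduce the $\xi$–fiber
$$
\Lambda_{q_0}:=\{\xi\in\mathbb{R}\mid(\xi,q_0)\in\Lambda\}\subset\mathbb{R},
$$
so that $\Lambda$ is the disjoint union $\bigsqcup_{q_0\in\mathbb{Z}/\lambda}\Lambda_{q_0}\times\{q_0\}$. Since the projection of $\Lambda$ onto the $q$–axis is contained in $I$, the fiber $\Lambda_{q_0}$ is empty unless $q_0\in I\cap(\mathbb{Z}/\lambda)$, so only those $q_0$ contribute. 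Taking $|\cdot|$ to be the natural product measure on $\mathbb{R}\times\mathbb{Z}/\lambda$ (Lebesgue on the first factor, counting on the second, matching the Fourier conventions of Section~\ref{sec:pre}), Fubini gives
$$
|\Lambda|=\sum_{q_0\in I\cap(\mathbb{Z}/\lambda)}|\Lambda_{q_0}|,
$$
and the hypothesis $|\Lambda_{q_0}|\le C$ for each such $q_0$ yields $|\Lambda|\le C\,\#\big(I\cap(\mathbb{Z}/\lambda)\big)$.

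It then remains to bound the number of points of $\mathbb{Z}/\lambda=\{k/\lambda\mid k\in\mathbb{Z}\}$ lying in $I$. Consecutive elements of $\mathbb{Z}/\lambda$ are spaced $1/\lambda$ apart, so any subset of length $|I|$ contains at most $\lambda|I|+1$ of them; hence $\#\big(I\cap(\mathbb{Z}/\lambda)\big)\le\lambda|I|+1\le\lambda(|I|+1)$, where the last inequality uses $\lambda>1$. Combining,
$$
|\Lambda|\le C\big(\lambda|I|+1\big)\le\lambda C\,(|I|+1),
$$
which is the claimed bound.

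I do not expect any genuine obstacle here: the proof is essentially bookkeeping. The two points needing care are (i) fixing the normalization of the measure on $\mathbb{R}\times\mathbb{Z}/\lambda$ consistently with the rest of the paper, and (ii) the count $\#\big(I\cap(\mathbb{Z}/\lambda)\big)\le\lambda|I|+1$, which is precisely where the factor $\lambda$ on the right–hand side originates, the additive ``$+1$'' accounting for a short interval $I$ that still meets the lattice. In the later sections this lemma is applied with $I$ taken to be a dyadic interval $I_N$ (or its intersection with a fixed interval), the fiberwise bound $|\Lambda_{q_0}|\le C$ coming from solving a resonance identity such as $\mathcal{H}(\xi_1,\xi_2,q_1,q_2)=\mathrm{const}$ from \eqref{eq:res} in the $\xi$–variable; but that input is downstream of the present statement.
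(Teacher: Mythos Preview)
Your argument is correct and is exactly the standard slicing/Fubini proof of this elementary measure-theoretic fact. Note, however, that the paper does not supply its own proof of this lemma: it is simply quoted as Lemma~3.7 of \cite{Molinet}, so there is no in-paper proof to compare against. Your write-up would serve perfectly well as a self-contained justification, and your interpretation of $|I|$ as the length of an interval containing the projection (so that $\#(I\cap\mathbb{Z}/\lambda)\le\lambda|I|+1$) is the one that makes the stated bound come out right and is consistent with how the lemma is applied in the proof of Lemma~\ref{lem:bilinear}.
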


Using this lemma and the mean value theorem, we have the estimates for constructing bilinear estimates.
\begin{lemma} [Lemma 3.8 in \cite{Molinet}]\label{lem:m2}
Let $I$ and $J$ be two intervals on the real line and $f:J \rightarrow \mathbb{R}$ be a smooth function. Then,
$$
\left|\{ \it{x} \in J\mid f(x) \in I \}\right| \lesssim \frac{|I|}{\rm{inf}_{\it{\xi \in J}}|\it{f}'(\xi)|}.
$$
\end{lemma}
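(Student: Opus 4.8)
\emph{Proof proposal.} The plan is a one-line application of the mean value theorem, once a trivial case is cleared away. If $\inf_{\xi\in J}|f'(\xi)|=0$ the asserted bound is vacuous, so we may assume $c:=\inf_{\xi\in J}|f'(\xi)|>0$. Set $S:=\{x\in J: f(x)\in I\}$; if $S$ is empty or a single point there is nothing to prove.

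Take any $x_1,x_2\in S$ with $x_1<x_2$. Since $f$ is smooth, the mean value theorem produces $\xi\in(x_1,x_2)\subset J$ with
$$
f(x_2)-f(x_1)=f'(\xi)\,(x_2-x_1),
$$
hence $|x_2-x_1|=|f(x_2)-f(x_1)|/|f'(\xi)|\le |I|/c$, using $f(x_1),f(x_2)\in I$ in the numerator and $|f'(\xi)|\ge c$ in the denominator. As this holds for every pair of points of $S$, the diameter of $S$ is at most $|I|/c$; and a subset of $\mathbb{R}$ of finite diameter $d$ is contained in an interval of length $d$, hence has Lebesgue measure at most $d$. Therefore $|S|\le |I|/c=|I|/\inf_{\xi\in J}|f'(\xi)|$, which is the claim (indeed with implicit constant $1$). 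Equivalently, the positivity of $\inf_J|f'|$ forces $f'$ to keep a constant sign on the interval $J$ by the intermediate value theorem, so $f$ is strictly monotone there and $S=f^{-1}(I)\cap J$ is itself an interval whose endpoints differ by at most $|I|/c$.

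I do not expect a substantive obstacle in this lemma; the only point worth flagging is that the hypothesis $\inf_J|f'|>0$ is doing the real work — it is exactly what prevents $f^{-1}(I)$ from being large — and in the bilinear estimates to follow the interval $J$ will be chosen (after freezing all but one of the frequency variables in the resonance function $\mathcal{H}$ of \eqref{eq:res}) precisely so that such a lower bound on the relevant derivative is available, of the size dictated by the transversality of the interacting frequencies; this lemma, combined with Lemma \ref{lem:m1}, then converts that derivative lower bound into the measure/counting bounds needed for the bilinear estimates.
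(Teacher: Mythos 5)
Your proof is correct and is exactly the standard mean value theorem argument that the paper relies on (the lemma is quoted from Molinet--Pilod without proof, and the surrounding text indicates it is just the MVT packaged as a measure bound). The handling of the trivial case $\inf_J|f'|=0$ and the passage from diameter to Lebesgue measure are both fine, and the constant $1$ you obtain is consistent with the stated $\lesssim$.
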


\begin{lemma}\label{lem:m3}
Let $a \neq 0,~b,~c$ be real numbers and $I$ an interval on the real line.
Then, 
$$
\left|\{ q \in \mathbb{Z}/\lambda \mid aq^2 + bq + c \in I \} \right|\lesssim  \lambda \left(\frac{|I|^{1/2}}{|a|^{1/2}} + 1\right).
$$
\end{lemma}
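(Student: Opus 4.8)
The plan is to complete the square, which turns the condition into $(q+\tfrac{b}{2a})^2$ lying in an interval, and then to count shifted lattice points in the preimage of an interval under $r\mapsto r^2$. Throughout, $|\cdot|$ on the left-hand side denotes the cardinality of a subset of the lattice $\mathbb{Z}/\lambda$, whose points are spaced $1/\lambda$ apart; hence any interval $J\subset\mathbb{R}$ of length $|J|$ contains at most $\lambda|J|+1$ points of $\mathbb{Z}/\lambda$, and any translate $\mathbb{Z}/\lambda+\theta$ of it contains at most $\lambda|J|+1$ points as well. I would record this elementary counting fact first.

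Next, replacing $a$ by $-a$ (which reflects $I$ about the origin but leaves its length unchanged) I may assume $a>0$. Writing $aq^2+bq+c=a\left(q+\tfrac{b}{2a}\right)^2+\left(c-\tfrac{b^2}{4a}\right)$, the condition $aq^2+bq+c\in I$ is equivalent to $\left(q+\tfrac{b}{2a}\right)^2\in J$, where $J=\tfrac1a\bigl(I-c+\tfrac{b^2}{4a}\bigr)$ is an interval with $|J|=|I|/|a|$. The key elementary observation is then that, for any interval $J\subset\mathbb{R}$, the set $\{r\in\mathbb{R}:r^2\in J\}$ is contained in a union of at most two intervals whose lengths sum to at most $2|J|^{1/2}$: if $J\subset[0,\infty)$ has endpoints $0\le\alpha\le\beta$ the set is $[-\sqrt\beta,-\sqrt\alpha]\cup[\sqrt\alpha,\sqrt\beta]$ and $\sqrt\beta-\sqrt\alpha\le\sqrt{\beta-\alpha}=|J|^{1/2}$; if $0$ lies in the interior of $J$ the set is the single interval $[-\sqrt\beta,\sqrt\beta]$ of length $2\sqrt\beta\le 2|J|^{1/2}$; and if $J\subset(-\infty,0)$ the set is empty.

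Combining these steps, the set $\{q\in\mathbb{Z}/\lambda:aq^2+bq+c\in I\}$ is contained in the intersection of the translated lattice $\mathbb{Z}/\lambda-\tfrac{b}{2a}$ with a union of at most two intervals of total length $\lesssim(|I|/|a|)^{1/2}$, so by the counting fact its cardinality is $\lesssim\lambda(|I|/|a|)^{1/2}+2$. Since $\lambda>1$, this is $\lesssim\lambda\bigl(|I|^{1/2}|a|^{-1/2}+1\bigr)$, which is the claimed bound.

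I do not expect a genuine obstacle here: the argument is elementary, and the only points requiring a little care are the reduction to $a>0$, the inequality $\sqrt\beta-\sqrt\alpha\le\sqrt{\beta-\alpha}$ for $0\le\alpha\le\beta$, and bookkeeping of the additive $+1$/$+2$ terms (harmless because $\lambda>1$). An alternative would be to partition the range of $q$ into dyadic pieces on which $|2aq+b|\sim2^j$ and apply Lemma \ref{lem:m2} on each piece, optimizing the resulting geometric series; but completing the square is shorter and avoids the summation.
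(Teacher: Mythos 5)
Your argument is correct and follows essentially the same route as the paper: reduce to $a>0$ (the paper further normalizes to $b=c=0$, which is what your completion of the square accomplishes, at the cost of replacing $\mathbb{Z}/\lambda$ by a translate — harmless for the counting), then bound the number of lattice points of spacing $1/\lambda$ in the preimage of an interval under $r\mapsto r^2$ via $\sqrt{\beta}-\sqrt{\alpha}\le\sqrt{\beta-\alpha}$. Your version is in fact slightly more careful than the paper's (you handle the case where the shifted interval straddles the origin, and your square-root inequality has constant $1$ rather than the paper's $\sqrt{2}$), but there is no substantive difference in method.
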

\noindent

\begin{proof}
Following Lemma 3.9 in \cite{Molinet}, we only consider the case of $a>0$ and $b=c=0$.
Put $I = [as^2, at^2]$ for $s, t \in \mathbb{R}$ with $s^2 < t^2$ and $s, t \ge0$.
From the shape of parabola curve and distribution of $\mathbb{Z}/\lambda$ points, we can say
$$
\left|\{ q \in \mathbb{Z}/\lambda \mid aq^2 + bq + c \in I \}\right| \le 2\lambda(|t-s|+1).
$$
Since
$$
|t-s|=\sqrt{(t-s)^2} \le \sqrt{2(t^2 - s^2)} = \frac{\sqrt{2(at^2 - as^2)}}{\sqrt{a}} = \frac{\sqrt{2} |I|^{1/2}}{|a|^{1/2}},
$$
we obtain
$$
\left|\{ q \in \mathbb{Z}/\lambda \mid aq^2 + bq + c \in I \} \right| \lesssim  \lambda \left(\frac{|I|^{1/2}}{|a|^{1/2}} + 1\right),
$$ 
which completes the proof of lemma.
\end{proof}

Using these lemmas, we get the following bilinear estimates.
\begin{lemma}\label{lem:bilinear}
For $u,\,v\in L^2_{\lambda}$,
\begin{equation}\label{eq:b1}
\| (P_{N_1} Q_{L_1} u) (P_{N_2} Q_{L_2} v) \|_{L^2_{\lambda}}  
\lesssim (N_1 \wedge N_2) (L_1 \wedge L_2)^{1/2}  \| P_{N_1} Q_{L_1} u \|_{L^2_{\lambda}}  \|P_{N_2} Q_{L_2} v \|_{L^2_{\lambda}},
\end{equation}
\begin{equation}\label{eq:b3}
\|R_K( (P_{N_1} Q_{L_1} u) (P_{N_2} Q_{L_2} v) )\|_{L^2_{\lambda}}  \lesssim  \frac{(N_1 \wedge N_2)^{1/2}}{K^{1/4}} (L_1 \wedge L_2)^{1/2} (L_1 \vee L_2)^{1/4}  \| P_{N_1} Q_{L_1} u \|_{L^2_{\lambda}}  \|P_{N_2} Q_{L_2} v \|_{L^2_{\lambda}}.
\end{equation}
Moreover, when $N_1\wedge N_2\ll N_1\vee N_2$, for $0<\theta<1$ we have 
\begin{equation}\label{eq:b2}
\| (P_{N_1} Q_{L_1} u) (P_{N_2} Q_{L_2} v) \|_{L^2_{\lambda}}  \lesssim \frac{(N_1 \wedge N_2)^{1/2}}{N_1 \vee N_2} (L_1 \wedge L_2)^{1/2} (L_1 \vee L_2)^{1/2}  \| P_{N_1} Q_{L_1} u \|_{L^2_{\lambda}}  \|P_{N_2} Q_{L_2} v \|_{L^2_{\lambda}},
\end{equation}
\begin{equation}\label{eq:b4}
\| (P_{N_1} Q_{L_1} u) (P_{N_2} Q_{L_2} v) \|_{L^2_{\lambda}}  \lesssim \frac{(N_1 \wedge N_2)^{(1+\theta)/2}}{(N_1 \vee N_2)^{1-\theta}} (L_1 \wedge L_2)^{1/2} (L_1 \vee L_2)^{\theta/2}  \| P_{N_1} Q_{L_1} u \|_{L^2_{\lambda}}  \|P_{N_2} Q_{L_2} v \|_{L^2_{\lambda}},
\end{equation}
where $N_1, N_2, L_1, L_2, K$ are dyadic numbers.
\end{lemma}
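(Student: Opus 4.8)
The plan is to establish all four inequalities by the familiar Cauchy--Schwarz argument on the Fourier side, reading the gains in $N_1\vee N_2$ and in $K$ off the algebraic structure of the resonance function \eqref{eq:res}. Writing $\zeta=(\xi,q)$ and $\zeta_j=(\xi_j,q_j)$, using the convolution identity \eqref{eq:convolution} together with Plancherel and replacing the Fourier transforms of $u$ and $v$ by their absolute values (which only increases the left-hand sides), the problem reduces to estimating $\|F*^{\lambda}G\|_{L^2_\lambda}$ with $F,G\ge0$ supported, respectively, in the frequency-modulation boxes $\{|\zeta_1|\sim N_1,\ |\tau_1-\sigma(\zeta_1)|\sim L_1\}$ and $\{|\zeta_2|\sim N_2,\ |\tau_2-\sigma(\zeta_2)|\sim L_2\}$, together with the extra output restriction $|\xi_1+\xi_2|\sim K$ in the case of \eqref{eq:b3}. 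Cauchy--Schwarz in the convolution variable gives
$$
\|F*^{\lambda}G\|_{L^2_\lambda}^2\lesssim\Big(\sup_{\zeta,\tau}|E_{\zeta,\tau}|\Big)\|F\|_{L^2_\lambda}^2\|G\|_{L^2_\lambda}^2,
$$
where $E_{\zeta,\tau}$ is the set of $(\zeta_1,\tau_1)$ compatible with all the constraints for the fixed output $(\zeta,\tau)=(\zeta_1+\zeta_2,\tau_1+\tau_2)$ and $|\cdot|$ denotes the measure $\tfrac1\lambda\sum_{q_1}\int d\xi_1\,d\tau_1$. Everything then comes down to a uniform bound on $|E_{\zeta,\tau}|$, for which the decisive input is the identity $\tau-\sigma(\zeta)=(\tau_1-\sigma(\zeta_1))+(\tau_2-\sigma(\zeta_2))+\mathcal H(\zeta_1,\zeta_2)$: once $\zeta$ is frozen, $\mathcal H$ is a fixed polynomial in $(\xi_1,q_1)$ and the two modulation constraints confine it to an interval of length $\lesssim L_1\vee L_2$.

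For \eqref{eq:b1} I would integrate $\tau_1$ over the slab coming from the smaller modulation (length $\lesssim L_1\wedge L_2$) and bound the remaining set of $(\xi_1,q_1)$ by the disc of radius $\sim N_1\wedge N_2$ in the metric $|(\xi,q)|^2=3\xi^2+q^2$, whose measure is $\lesssim(N_1\wedge N_2)^2$, giving $|E_{\zeta,\tau}|\lesssim(L_1\wedge L_2)(N_1\wedge N_2)^2$. For \eqref{eq:b2}, after peeling off the same $\tau_1$-slab, I would use that $\partial_{\xi_1}\mathcal H=|\zeta_2|^2-|\zeta_1|^2$, which in the regime $N_1\wedge N_2\ll N_1\vee N_2$ has size $\sim(N_1\vee N_2)^2$ uniformly on the relevant (boundedly many) intervals in $\xi_1$; Lemma~\ref{lem:m2} then bounds the $\xi_1$-section of the set $\{\mathcal H\in I\}$, $|I|\lesssim L_1\vee L_2$, by $\lesssim(L_1\vee L_2)(N_1\vee N_2)^{-2}$, and summation over the $\lesssim\lambda(N_1\wedge N_2)$ admissible $q_1\in\mathbb{Z}/\lambda$ (taking $N_1\le N_2$ by symmetry) yields exactly \eqref{eq:b2}.

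For \eqref{eq:b3} the gain comes instead from the $q$-variable: for each fixed $\xi_1$, $\mathcal H$ is quadratic in $q_1$ with leading coefficient $-(\xi_1+\xi_2)=-\xi$, of size $\sim K$ on the support of $R_K$, so Lemma~\ref{lem:m3} bounds the number of admissible $q_1$ by $\lesssim\lambda\big((L_1\vee L_2)^{1/2}K^{-1/2}+1\big)$; combining this with $|\xi_1|\lesssim N_1\wedge N_2$ and the $\tau_1$-slab gives $|E_{\zeta,\tau}|\lesssim(L_1\wedge L_2)(N_1\wedge N_2)\big((L_1\vee L_2)^{1/2}K^{-1/2}+1\big)$, whose leading term is precisely \eqref{eq:b3}. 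The lower-order term ($+1$ above) is harmless once $K\lesssim L_1\vee L_2$, while the complementary range $K\gtrsim L_1\vee L_2$ requires a further case analysis coupling the quadratic structure in $q_1$ with the $\xi_1$-transversality away from the nearly diagonal set $|\zeta_1|\approx|\zeta_2|$. Finally, \eqref{eq:b4} I would deduce from \eqref{eq:b1} and \eqref{eq:b2} by interpolation, taking the geometric mean of the two right-hand sides with weights $1-\theta$ and $\theta$, which supplies the range of $\theta$ relevant in the sequel.

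The step I expect to be the real obstacle is the bookkeeping in \eqref{eq:b3} (and, for its full stated range, in \eqref{eq:b4}): one must carry the counting through all orderings of $N_1,N_2,L_1,L_2,K$, track the powers of $\lambda$ produced by the sums over $\mathbb{Z}/\lambda$, and, most delicately, ensure that the additive $+1$ terms in Lemmas~\ref{lem:m2} and \ref{lem:m3}, which reflect the discreteness of the $y$-frequency, never spoil the claimed exponents. Checking that the transversality $|\partial_{\xi_1}\mathcal H|\sim(N_1\vee N_2)^2$ and the curvature (leading coefficient $\sim K$) are genuinely non-degenerate on the relevant, possibly disconnected, portions of $(\xi_1,q_1)$-space, so that Lemmas~\ref{lem:m2} and \ref{lem:m3} apply, is the other point requiring care.
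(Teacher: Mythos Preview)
Your arguments for \eqref{eq:b1}, \eqref{eq:b2}, and \eqref{eq:b4} are essentially identical to the paper's: Cauchy--Schwarz on the Fourier side, the $\tau_1$-slab of length $L_1\wedge L_2$, the transversality $|\partial_{\xi_1}\mathcal H|=\big||\zeta_1|^2-|\zeta_2|^2\big|\gtrsim(N_1\vee N_2)^2$ for \eqref{eq:b2}, and interpolation for \eqref{eq:b4}.

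The genuine difference is in \eqref{eq:b3}. You exploit the quadratic structure of $\mathcal H$ in the \emph{discrete} variable $q_1$ (leading coefficient $-\xi$, size $\sim K$) via Lemma~\ref{lem:m3}, which produces the additive $+1$ you flag as unresolved when $K\gtrsim L_1\vee L_2$. Your suggested remedy, falling back on $\xi_1$-transversality ``away from the nearly diagonal set $|\zeta_1|\approx|\zeta_2|$,'' cannot close the argument as stated, since \eqref{eq:b3} is claimed for \emph{all} $N_1,N_2$, including $N_1\sim N_2$ where that transversality degenerates. The paper sidesteps the whole issue by reading the same curvature in the \emph{continuous} variable $\xi_1$: one has $\partial_{\xi_1}^2\mathcal H(\xi_1,\xi-\xi_1,q_1,q-q_1)=6\xi$, hence $|\partial_{\xi_1}^2\mathcal H|\sim K$ on the support of $R_K$, and the elementary sublevel-set bound for a real quadratic (the continuous analogue of Lemma~\ref{lem:m3}) gives, for each fixed $q_1$,
\[
\big|\{\xi_1:\ \mathcal H\in I\}\big|\ \lesssim\ \Big(\frac{|I|}{K}\Big)^{1/2}\ \lesssim\ \frac{(L_1\vee L_2)^{1/2}}{K^{1/2}},
\]
with no $+1$. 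Summing over the $\lesssim\lambda(N_1\wedge N_2)$ admissible $q_1$ and multiplying by the $\tau_1$-slab yields \eqref{eq:b3} directly, uniformly in $N_1,N_2$. Swapping the roles of $\xi_1$ and $q_1$ in your argument for \eqref{eq:b3} eliminates the gap.
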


\begin{proof}
The estimate in \eqref{eq:b4} follows from the interpolation argument with \eqref{eq:b1} and \eqref{eq:b2}, so that we prove \eqref{eq:b1}, \eqref{eq:b2} and \eqref{eq:b3} in this order.

Using the Plancherel's identity \eqref{eq:Plancherel}, convolution structure \eqref{eq:convolution} and the Cauchy-Schwarz inequality, we get
\begin{equation*}
\begin{split}
\left\| (P_{N_1} Q_{L_1} u) \right. & \left.(P_{N_2} Q_{L_2} v) \right\|_{L^2_{\lambda}}   \\
&= \left(\frac{1}{\lambda}\sum_{q\in\mathbb{Z}/\lambda}\int_{\mathbb{R}^2}\left|\left(\widehat{F}^{\lambda}(P_{N_1} Q_{L_1} u)*^{\lambda}\widehat{F}^{\lambda}(P_{N_2} Q_{L_2} v)\right)(\xi,q,\tau)\right|^2\,d\xi d\tau\right)^{1/2} \\
& \lesssim \frac{1}{\lambda^{1/2}}\sup_{(\xi, q, \tau)\in\mathbb{R}\times\mathbb{Z}/\lambda\times\mathbb{R}} |A_{\xi, q, \tau}|^{1/2} \| P_{N_1} Q_{L_1} u \|_{L^2_{\lambda}}  \|P_{N_2} Q_{L_2} v \|_{L^2_{\lambda}},
\end{split}
\end{equation*}
where
\begin{equation*}
\begin{split}
A_{\xi, q, \tau} = \{ (\xi_1, q_1, \tau_1) \in \mathbb{R} \times \mathbb{Z}/\lambda \times \mathbb{R} \mid  |(\xi_1, q_1)| \in I_{N_1},~|(\xi - \xi_1, q - q_1)| \in I_{N_2},\\
 |\tau_1 - \sigma(\xi_1, q_1)| \in I_{L_1},~|\tau - \tau_1 - \sigma(\xi - \xi_1, q - q_1)| \in I_{L_2} \}.
\end{split}
\end{equation*}
By the definition of $A_{\xi, q, \tau}$ and Lemma \ref{lem:m1} with $|I|\sim\lambda(N_1\wedge N_2)$, we obtain 
$$
|A_{\xi ,q, \tau}| \lesssim \lambda (N_1 \wedge N_2)^2 (L_1 \wedge L_2),
$$
which is \eqref{eq:b1}.

Next we prove \eqref{eq:b2}.
By the triangle inequality
\begin{equation*}
\begin{split}
|\tau_1 - \sigma(\xi_1, q_1)| + |\tau - \tau_1 - \sigma(\xi - \xi_1, q - q_1)| &\le |\tau - \sigma(\xi_1, q_1)  - \sigma(\xi - \xi_1, q - q_1)| \nonumber \\
&= |\tau - \sigma(\xi, q) - \mathcal{H} (\xi_1, \xi - \xi_1, q_1, q - q_1)|, \nonumber
\end{split}
\end{equation*}
we get
$$
|A_{\xi, q, \tau}| \lesssim (L_1 \wedge L_2) |B_{\xi, q, \tau}|,
$$
where
\begin{equation*}
\begin{split}
B_{\xi, q, \tau} = \{ (\xi_1, q_1) \in \mathbb{R} \times \mathbb{Z}/\lambda \mid |(\xi_1, q_1)| \in I_{N_1}, |(\xi - \xi_1, q - q_1)| \in I_{N_2}, \\
|\tau - \sigma(\xi, q) - \mathcal{H} (\xi_1, \xi - \xi_1, q_1, q - q_1)| \lesssim L_1 \vee L_2 \}.
\end{split}
\end{equation*}
Let us focus on the resonance function $\mathcal{H}$ in \eqref{eq:res}.
We have
\begin{equation}\label{eq:A_1}
\left| \frac{\partial \mathcal{H}}{\partial \xi_1} (\xi_1, \xi - \xi_1, q_1, q-q_1) \right| = |3 \xi_1^2 + q_1^2 - (3(\xi - \xi_1)^2 + (q-q_1)^2)| \gtrsim (N_1 \vee N_2)^2.
\end{equation}
If we define $\widetilde{B}_{\xi, q, \tau}(q_1) = \{ \xi_1 \in \mathbb{R} \mid (\xi_1, q_1) \in B_{\xi,q, \tau} \}$ for each $q_1$,  Lemma \ref{lem:m2} yields
$$
\left|\widetilde{B}_{\xi, q, \tau}(q_1)\right| \lesssim \frac{L_1 \vee L_2}{(N_1 \vee N_2)^2} .
$$
Hence
$$
|B_{\xi, q, \tau}| \lesssim \lambda \frac{(L_1 \vee L_2)(N_1 \wedge N_2)}{(N_1 \vee N_2)^2}.
$$
Combining this with $|A_{\xi, q, \tau}| \lesssim (L_1 \wedge L_2) |B_{\xi, q, \tau}|$, we obtain \eqref{eq:b2}. 

From the Cauchy-Schwarz inequality and the Plancherel's identity as before, it holds that
$$
\left\|R_K ( (P_{N_1} Q_{L_1} u)(P_{N_2} Q_{L_2} v) )\right\|_{L^2_{\lambda}}
 \lesssim \frac{1}{\lambda^{1/2}}\sup_{(\xi, q, \tau)\in\mathbb{R}\times\mathbb{Z}/\lambda\times\mathbb{R}} |A^K_{\xi, q, \tau}|^{1/2} \| P_{N_1} Q_{L_1} u \|_{L^2_{\lambda}}  \|P_{N_2} Q_{L_2} v \|_{L^2_{\lambda}},
$$
where
\begin{equation*}
\begin{split}
A^K_{\xi, q, \tau} = \{ (\xi_1, q_1, \tau_1) \in \mathbb{R} \times \mathbb{Z}/\lambda \times \mathbb{R} \mid |\xi| \sim K, ~ |(\xi_1, q_1)| \in I_{N_1},~  |(\xi - \xi_1, q - q_1)| \in I_{N_2},\\
 |\tau_1 - \sigma(\xi_1, q_1)| \in I_{L_1}, ~|\tau - \tau_1 - \sigma(\xi - \xi_1, q - q_1)| \in I_{L_2} \}.
\end{split}
\end{equation*}
Using the triangle inequality, we have
$$
|A^K_{\xi, q, \tau}| \lesssim (L_1 \wedge L_2) |B^K_{\xi, q, \tau}|,
$$
where
\begin{equation*}
\begin{split}
B^K_{\xi, q, \tau} = \{ (\xi_1, q_1) \in \mathbb{R} \times \mathbb{Z}/\lambda \mid |\xi| \sim K,~ |(\xi_1, q_1)| \in I_{N_1},~ |(\xi - \xi_1, q - q_1)| \in I_{N_2},\\
|\tau - \sigma(\xi, q) - \mathcal{H} (\xi_1, \xi - \xi_1, q_1, q - q_1)| \lesssim L_1 \vee L_2 \}.
\end{split}
\end{equation*}
For the bound of $B^K_{\xi, q, \tau}$, we calculate the second derivative of $\mathcal{H}$ as
\begin{equation}\label{eq:A_2}
\left|\frac{\partial^2 \mathcal{H}}{{\partial \xi_1}^2} (\xi_1, \xi - \xi_1, q_1, q-q_1) \right| = 6|\xi| \sim K, 
\end{equation}
for $(\xi_1, q_1) \in B^K_{\xi, q, \tau}$.
Let $\widetilde{B}^K_{\xi, q, \tau}(q_1) = \{ \xi_1 \in \mathbb{R}\mid (\xi_1, q_1) \in B^K_{\xi, q, \tau} \}$ for each $q_1$.
Combining with Lemma \ref{lem:m3}, we get
$$
|\widetilde{B}^K_{\xi, q, \tau}(q_1)| \lesssim \frac{(L_1 \vee L_2)^{1/2}}{K^{1/2}},
$$
for all $q_1 \in \mathbb{Z}/\lambda$.
Finally,
\begin{equation}\label{eq:B_1}
|B^K_{\xi, q, \tau}| \lesssim \lambda \frac{(N_1 \wedge N_2) (L_1 \vee L_2)}{K^{1/2}}
\end{equation}
and we obtain \eqref{eq:b3} by $|A^K_{\xi, q, \tau}| \lesssim (L_1 \wedge L_2) |B^K_{\xi, q, \tau}|$. 
\end{proof}

We prove the following bilinear estimates in $I^{-1}X^{1,1/2+}_{\lambda}$.
\begin{lemma}\label{prop:I-b}
For $s>9/10$,
$$
\| \partial_x I(uv)\|_{X^{1, -1/2+}_{\lambda}} \lesssim \| Iu \|_{X^{1, 1/2+}_{\lambda}} \| Iv \|_{X^{1, 1/2+}_{\lambda}}.
$$
\end{lemma}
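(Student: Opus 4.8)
The plan is to follow the proof of the bilinear $X^{s,b}$ estimate underlying the local well-posedness result of \cite{Osawa} (which in turn rests on \cite{Molinet}), but carried out at the $X^{1,1/2+}_\lambda$ level with the $I$-multiplier inserted; the only genuinely new point is to control the Fourier-side weight produced by $I$ and to check it never spoils the estimate. First I would dualize: writing $f=Iu$, $g=Iv$ and testing against $w$ with $\|w\|_{X^{-1,1/2-}_\lambda}=1$, it suffices to bound
$$
\Big|\int \partial_x I(uv)\,\overline{w}\,dx\,dy\,dt\Big|.
$$
Using the Plancherel and convolution identities of Section~\ref{sec:pre}, this is a trilinear integral over $\{\zeta=\zeta_1+\zeta_2,\ \tau=\tau_1+\tau_2\}$, $\zeta_i=(\xi_i,q_i)$, with symbol
$$
\mathcal{M}(\zeta_1,\zeta_2)=\frac{|\xi_1+\xi_2|\;m(\zeta_1+\zeta_2)\langle\zeta_1+\zeta_2\rangle}{\langle\zeta_1\rangle m(\zeta_1)\;\langle\zeta_2\rangle m(\zeta_2)}.
$$
I would then insert Littlewood--Paley pieces $P_{N_i}Q_{L_i}$ on $f$, $g$ and on $w$, write $N_0,L_0$ for the output frequency and modulation, assume $N_1\ge N_2$ by symmetry, and reduce the estimate to summing, over all dyadic $N_i,L_i$ (and, in some ranges, an auxiliary $x$-frequency $K\lesssim N_0$), the product of $\sup\mathcal{M}$ on the block with the bilinear bound for the two $L^2_\lambda$-factors furnished by Lemma~\ref{lem:bilinear}.

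The next step is the elementary analysis of $\mathcal{M}$, for which I would use only that $m$ is non-increasing and that $m(\zeta)\langle\zeta\rangle$ is, up to a constant, non-decreasing in $|\zeta|$ with $m(\zeta)\langle\zeta\rangle\gtrsim 1$. In the low--low region $N_1\lesssim N$ one has $m\equiv 1$ on all three frequencies, so $\mathcal{M}\sim|\xi_1+\xi_2|\langle\zeta_1+\zeta_2\rangle\langle\zeta_1\rangle^{-1}\langle\zeta_2\rangle^{-1}$ and the block estimate is exactly the one proved in \cite{Osawa}. In the high--low region $N_1\gtrsim N\gg N_2$ one has $|\zeta_1+\zeta_2|\sim|\zeta_1|$ and $m(\zeta_2)=1$, whence $\mathcal{M}\lesssim|\xi_1+\xi_2|\langle\zeta_2\rangle^{-1}\lesssim\langle\zeta_1+\zeta_2\rangle\langle\zeta_2\rangle^{-1}$, again reducing to the known $H^1\times H^1\to H^1$ block estimate. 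The only region where $\mathcal{M}$ can exceed $1$ is the high--high region $N_1\sim N_2\gtrsim N$, where monotonicity gives $m(\zeta_1+\zeta_2)\langle\zeta_1+\zeta_2\rangle\lesssim m(N_0)N_0\le m(N_1)N_1$ while $\langle\zeta_i\rangle m(\zeta_i)\sim N_1 m(N_1)$, so that
$$
\mathcal{M}\lesssim\frac{|\xi_1+\xi_2|}{m(N_1)N_1}\lesssim\frac{N_0}{N_1}\Big(\frac{N_1}{N}\Big)^{1-s}.
$$

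To close the high--high sum I would exploit the resonance identity \eqref{eq:res}: since $\mathcal{H}(\zeta_1,\zeta_2)=(\tau_1-\sigma(\zeta_1))+(\tau_2-\sigma(\zeta_2))-(\tau-\sigma(\zeta))$, the largest of $L_0,L_1,L_2$ dominates $|\mathcal{H}(\zeta_1,\zeta_2)|$, and I would split into cases according to which modulation is largest. In each case one applies \eqref{eq:b3} with $K\sim|\xi_1+\xi_2|$ (the $\partial_x$ contributing the factor $K$), or \eqref{eq:b4} when $N_0\ll N_1$, together with the modulation gains $\langle L_0\rangle^{1/2-}$ from $w\in X^{-1,1/2-}_\lambda$ and $\langle L_i\rangle^{1/2+}$ from $f,g$; the lower bounds for $|\mathcal{H}|$ obtained via Lemmas~\ref{lem:m1}--\ref{lem:m3} and the identities \eqref{eq:A_1}, \eqref{eq:A_2} then produce the extra negative power of $N_1$ that makes the geometric series in the $N_i$ and $L_i$ converge. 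Tracking these exponents, the multiplier loss is at worst the factor $(N_1/N)^{1-s}$ above, which is absorbed by the slack already present in the bilinear estimate of \cite{Osawa} for $s<1$; the sum then converges under the stated hypothesis $s>9/10$.

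I expect the main obstacle to be precisely the high--high interaction with small output, $N_0\ll N$: there one must simultaneously pay the multiplier loss $(N_1/N)^{1-s}$, decompose the output in the $x$-frequency $K$ so as to use the refined estimate \eqref{eq:b3}, and extract enough from the resonance lower bound for $|\mathcal{H}|$ (whose size depends delicately on how $\xi_1,\xi_2$ and $q_1,q_2$ are distributed) to make the $N_1$-exponent strictly negative. The low--low, high--low and high--high-to-high ranges should be routine once the weight bounds above are in place, since they reduce directly to the already established bilinear estimates.
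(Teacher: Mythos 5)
Your overall architecture (duality, Littlewood--Paley decomposition, pointwise bounds on the multiplier $\mathcal{M}$, then the $L^2_\lambda$ bilinear estimates of Lemma \ref{lem:bilinear}) coincides with the paper's, and your multiplier bounds in the low--low, high--low and high--high regions are correct. The genuine gap is that you have misidentified where the difficulty lies, and the plan you give for the one case that actually requires work does not close. The interaction you single out as the main obstacle --- high--high with small output, $N_0\ll N$ --- is in fact one of the easy cases: there $|\xi|\lesssim N_0$ is small, no resonance analysis is needed, and a single application of \eqref{eq:b4} yields a bound of size $N^{-(1-s)}N_1^{-(s-1/2-3\theta/2)}$, summable already for $s>1/2$. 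Conversely, the balanced case $N\ll N_0\sim N_1\sim N_2$ (the paper's $J_{HH\rightarrow H}$), which you declare routine, is the heart of the proof and the sole source of the threshold $s>9/10$.

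In that balanced region your proposed mechanism --- apply \eqref{eq:b3} with $K\sim|\xi|$ and extract a negative power of $N_0$ from a lower bound on $|\mathcal{H}|$ via \eqref{eq:A_1}--\eqref{eq:A_2} --- fails on the near-resonant set, because neither derivative bound is available there: $\partial_{\xi_1}\mathcal{H}=|\zeta|^2-|\zeta_1|^2$ can vanish when $|\zeta|\approx|\zeta_1|$, and $\partial^2_{\xi_1}\mathcal{H}=6|\xi|$ can be $O(1)$ even though $N_0$ is huge, since the interacting frequencies may concentrate near the $q$-axis. Without a modulation gain, \eqref{eq:b3} leaves a factor of roughly $K^{3/4}N_0^{1/2-s}N^{-(1-s)}$ after summing over $K\lesssim N_0$, which diverges in $N_0$ for every $s<5/4$. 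Closing this case requires the additional structure of the paper's subcases (i)--(v): the dichotomy on whether some $||\zeta|^2-|\zeta_j|^2|$ exceeds $N_0^{6/5}(L_0\vee L_1\vee L_2)^{0+}$ (this exponent is precisely what produces the factor $N_0^{-(s-9/10)}$ and hence the hypothesis $s>9/10$), a triple restriction $R_{K_0},R_{K_1},R_{K_2}$ in the $x$-frequency when all these differences are small, and, on the genuinely resonant set $|q+q_1|\lesssim 1$, $|\xi-\xi_1|\lesssim 1$, a Kenig--Ponce--Vega change of variables $\xi_1\mapsto\mathcal{H}$ combined with a count of the admissible $q_1\in\mathbb{Z}/\lambda$. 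None of these steps is implied by the estimates you invoke, so as written the proposal does not establish the lemma.
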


\begin{proof}
We may assume the functions $\widehat{u}^{\lambda}$ and $\widehat{v}^{\lambda}$ are nonnegative, since by the definition of $X_{\lambda}^{s,b}$ norm.
Replacing $\langle \zeta \rangle \langle \tau-\sigma(\zeta) \rangle^{1/2+} m(\zeta) \widehat{u}(\zeta, \tau)$ and $\langle \zeta \rangle \langle \tau-\sigma(\zeta) \rangle^{1/2+} m(\zeta) \widehat{v}(\zeta, \tau)$ by $\widehat{u}(\zeta, \tau)$ and $\widehat{v}(\zeta, \tau)$, respectively, and duality argument, one has to show that
\begin{equation}\label{eq:I-b1}
\begin{split}
J= &  \frac{1}{\lambda^2}\sum_{q,q_1 \in \mathbb{Z}/\lambda} \int_{\mathbb{R}^4} \Gamma^{\xi_1, q_1, \tau_1}_{\xi, q, \tau}  \frac{m(\zeta)}{m(\zeta_1)m(\zeta-\zeta_1)} \widehat{u}^{\lambda}(\zeta_1, \tau_1) \widehat{v}^{\lambda}(\zeta-\zeta_1, \tau-\tau_1) \widehat{w}^{\lambda}(\zeta, \tau)\, d\xi d\xi_1 d \tau d \tau_1\\
\lesssim & \| u \|_{L^2_{\lambda}} \| v \|_{L^2_{\lambda}} \| w \|_{L^2_{\lambda}},
\end{split}
\end{equation}
for $w\in L^2_{\lambda}$ whose Fourier transform is nonnegative.
Here
$$
\Gamma^{\xi_1, q_1, \tau_1}_{\xi, q, \tau} =\frac{ |\xi| \langle \zeta \rangle }{ \langle \zeta_1 \rangle \langle\zeta- \zeta_1 \rangle \langle \tau-\sigma(\zeta) \rangle^{1/2-} \langle \tau_1-\sigma(\zeta_1)\rangle^{1/2+} \langle \tau-\tau_1-\sigma(\zeta-\zeta_1) \rangle^{1/2+}}.
$$
Using dyadic decomposition, we rewrite $J$ as the following 
\begin{equation*}
J = \sum_{\scriptstyle N_0,N_1,N_2 \atop\scriptstyle L_0, L_1, L_2} J^{L_0, L_1, L_2}_{N_0, N_1, N_2},
\end{equation*}
where
\begin{equation}\label{eq:JQ}
\begin{split}
J^{L_0, L_1, L_2}_{N_0, N_1, N_2} = & \frac{1}{\lambda^2}\sum_{q,q_1\in\mathbb{Z}/\lambda} \int_{\mathbb{R}^4} \Gamma^{\xi_1, q_1, \tau_1}_{\xi, q, \tau}\frac{m(\zeta)}{m(\zeta_1)m(\zeta-\zeta_1)} \\ 
& \widehat{P_{N_1} Q_{L_1} u}^{\lambda}(\zeta_1, \tau_1)  \widehat{P_{N_2} Q_{L_2} v}^{\lambda}(\zeta-\zeta_1, \tau-\tau_1)\widehat{P_{N_0} Q_{L_0} w}^{\lambda}(\xi, q, \tau)\,d\xi d\xi_1 d\tau d\tau_1.
\end{split}
\end{equation}
We decompose again $J$ to five parts in frequencies:
$$
J_{LL \rightarrow L} = \sum_{\scriptstyle N_1\vee N_2 \vee N_0 \ll N \atop{\scriptstyle L_0,L_1,L_2}} J^{L_0, L_1, L_2}_{N_0, N_1, N_2},
$$
$$
J_{LH \rightarrow H} = \sum_{\scriptstyle N_1\ll N_2\sim N_0 \atop{\scriptstyle N_0 \gtrsim N \atop{\scriptstyle L_0,L_1,L_2}}} J^{L_0, L_1, L_2}_{N_0, N_1, N_2},
$$
$$
J_{HL \rightarrow H} = \sum_{\scriptstyle N_2\ll N_1\sim N_0  \atop{\scriptstyle N_0 \gtrsim N \atop{\scriptstyle L_0,L_1,L_2}}} J^{L_0, L_1, L_2}_{N_0, N_1, N_2},
$$
$$
J_{HH \rightarrow L} = \sum_{\scriptstyle N_0 \ll N_1\sim N_2  \atop{\scriptstyle N_1 \gtrsim N \atop{\scriptstyle L_0,L_1,L_2}}}J^{L_0, L_1, L_2}_{N_0, N_1, N_2},
$$
$$
J_{HH \rightarrow H} = J - (J_{LL \rightarrow L} + J_{HL \rightarrow H} + J_{LH \rightarrow H} + J_{HH \rightarrow L}).
$$

\underline{Estimate of $J_{LL \rightarrow L}$:} 
In this case, we have
$$
\frac{m(\zeta)}{m(\zeta_1) m(\zeta-\zeta_1)}\sim 1,\quad
\Gamma^{\xi_1, q_1, \tau_1}_{\xi, q, \tau}\lesssim \frac{N_1\vee N_2}{(N_1\wedge N_2)L_0^{1/2-}L_1^{1/2+}L_2^{1/2+}}.
$$
We split this case two cases, when $N_1\wedge N_2 \ll N_1\vee N_2$, and when $N_1\wedge N_2 \sim  N_1\vee N_2$.

First, we consider the contribution of the case when $N_1\wedge N_2 \ll N_1\vee N_2$.
In this case, by \eqref{eq:b2} and the Cauchy-Schwarz inequality, we have that  the contribution of the case to $J_{LL \rightarrow L}$ is bounded by
\begin{equation*}
\begin{split}
\lesssim & \sum_{\scriptstyle N_1\vee N_2 \vee N_0 \ll N\atop{\scriptstyle N_1\wedge N_2 \ll N_1\vee N_2 \atop{\scriptstyle L_0,L_1,L_2}}}\frac{N_1\vee N_2}{(N_1\wedge N_2)L_0^{1/2-}L_1^{1/2+}L_2^{1/2+}}\| (P_{N_1} Q_{L_1} u) (P_{N_2} Q_{L_2} v)\|_{L^2_{\lambda}} \| P_{N_0} Q_{L_0} w \|_{L^2_{\lambda}}\\
\lesssim  &  \sum_{\scriptstyle \scriptstyle N_1\vee N_2 \vee N_0 \ll N\atop{\scriptstyle L_0,L_1,L_2 }}\frac{1}{(N_1\wedge N_2)^{0+}L_0^{0+}L_1^{0+}L_2^{0+}}\|P_{N_1} Q_{L_1} u\|_{L^2_{\lambda}}\|P_{N_2} Q_{L_2} v\|_{L^2_{\lambda}} \| P_{N_0} Q_{L_0} w \|_{L^2_{\lambda}}\\
\lesssim  & \| u \|_{L^2_{\lambda}} \| v \|_{L^2_{\lambda}} \| w \|_{L^2_{\lambda}},
\end{split}
\end{equation*}
where we use $N_0\lesssim N_1\vee N_2$.

Second, we consider the contribution of the case when $N_1\wedge N_2 \sim  N_1\vee N_2$, namely $N_1\sim N_2$.
Note that $P_N=P_N\widetilde{P}_N$ for $\widetilde{P}_N=P_{N/2}+P_N+P_{2N}$. 
In this case, the contribution of the case to $J_{LL \rightarrow L}$ is bounded by
\begin{equation*}
\begin{split}
\lesssim & \sum_{\scriptstyle N_1\vee N_2 \vee N_0 \ll N\atop{\scriptstyle N_1\sim N_2 \atop{\scriptstyle L_0,L_1,L_2}}}\frac{N_1\vee N_2}{(N_1\wedge N_2)L_0^{1/2-}L_1^{1/2+}L_2^{1/2+}}\|\widetilde{P}_{N_0}( (P_{N_1} Q_{L_1} u) (P_{N_2} Q_{L_2} v))\|_{L^2_{\lambda}} \| P_{N_0} Q_{L_0} w \|_{L^2_{\lambda}}\\
\lesssim  &  \sum_{\scriptstyle  N_1\sim N_2 \atop{\scriptstyle L_1,L_2 }}\frac{1}{L_1^{0+}L_2^{0+}}\|P_{N_1} Q_{L_1} u\|_{L^2_{\lambda}}\|P_{N_2} Q_{L_2} v\|_{L^2_{\lambda}} \|w \|_{L^2_{\lambda}}\\
\lesssim  & \| u \|_{L^2_{\lambda}} \| v \|_{L^2_{\lambda}} \| w \|_{L^2_{\lambda}}.
\end{split}
\end{equation*}
Hence
$$
J_{LL \rightarrow L}\lesssim  \| u \|_{L^2_{\lambda}} \| v \|_{L^2_{\lambda}} \| w \|_{L^2_{\lambda}}.
$$

\underline{Estimate of $J_{LH \rightarrow H}$:}
We split this case into two cases that $N_1\ll N \lesssim N_2 \sim N_0 $ and that $N\lesssim  N_1\ll N_2\sim N_0$. 

If $N_1\ll N \lesssim N_2 \sim N_0 $, we have
$$
\frac{m(\zeta)}{m(\zeta_1) m(\zeta-\zeta_1)}\sim 1.
$$
Applying the Cauchy-Schwarz inequality again with \eqref{eq:b2}, we have that the contribution of this case to $J^{L_0, L_1, L_2}_{N_0, N_1, N_2}$ is bounded by
\begin{equation}\label{eq:LH1}
\begin{split}
\lesssim & \frac{N_0}{N_1 L_0^{1/2-} L_1^{1/2+} L_2^{1/2+}} \| (P_{N_1} Q_{L_1} u) (P_{N_2} Q_{L_2} v)\|_{L^2_{\lambda}} \| P_{N_0} Q_{L_0} w \|_{L^2_{\lambda}}\\
\lesssim & \frac{1}{N_1^{1/2} L_0^{1/2-} L_1^{0+} L_2^{0+}} \|P_{N_1} Q_{L_1} u\|_{L^2_{\lambda}} \|P_{N_2} Q_{L_2} v\|_{L^2_{\lambda}} \| P_{N_0} Q_{L_0} w \|_{L^2_{\lambda}}\\
\lesssim & \frac{1}{N_1^{0+} L_0^{0+} L_1^{0+} L_2^{0+}} \|P_{N_1} Q_{L_1} u\|_{L^2_{\lambda}} \|P_{N_2} Q_{L_2} v\|_{L^2_{\lambda}} \| P_{N_0} Q_{L_0} w \|_{L^2_{\lambda}}.
\end{split}
\end{equation}

If $N\lesssim  N_1\ll N_2\sim N_0$, we have
$$
\frac{m(\zeta)}{m(\zeta_1) m(\zeta-\zeta_1)} \sim \frac{N_1^{1-s}}{N^{1-s}}.
$$
In a similar way to above, we have that the contribution of this case to $J^{L_0, L_1, L_2}_{N_0, N_1, N_2}$ is bounded by
\begin{equation}\label{eq:LH2}
\begin{split}
\lesssim & \frac{N_0N_1^{1-s}}{N_1N^{1-s} L_0^{1/2-} L_1^{1/2+} L_2^{1/2+}} \| (P_{N_1} Q_{L_1} u) (P_{N_2} Q_{L_2} v)\|_{L^2_{\lambda}} \| P_{N_0} Q_{L_0} w \|_{L^2_{\lambda}}\\
\lesssim & \frac{1}{N_1^{s-1/2} L_0^{1/2-} L_1^{0+} L_2^{0+}} \|P_{N_1} Q_{L_1} u\|_{L^2_{\lambda}} \|P_{N_2} Q_{L_2} v\|_{L^2_{\lambda}} \| P_{N_0} Q_{L_0} w \|_{L^2_{\lambda}}\\
\lesssim & \frac{1}{N_1^{0+} L_0^{0+} L_1^{0+} L_2^{0+}} \|P_{N_1} Q_{L_1} u\|_{L^2_{\lambda}} \|P_{N_2} Q_{L_2} v\|_{L^2_{\lambda}} \| P_{N_0} Q_{L_0} w \|_{L^2_{\lambda}}.
\end{split}
\end{equation}

Therefore, by \eqref{eq:LH1} and \eqref{eq:LH2} in conjunction with previous estimate, we have
$$
J_{LH \rightarrow H}\lesssim \| u \|_{L^2_{\lambda}} \| v \|_{L^2_{\lambda}} \| w \|_{L^2_{\lambda}},
$$
which is acceptable.

\underline{Estimate of $J_{HL \rightarrow H}$:}
The proof is same as for $J_{LH \rightarrow H}$, because of the symmetry.

\underline{Estimate of $J_{HH \rightarrow L}$:}
By symmetry, we assume $N_0\ll N_1\le N_2$.
We separate this case into two cases that $N_0 \ll  N\ll N_1 \sim N_2$ and that $N\lesssim N_0\ll N_1\sim N_2$. 

If $N_0 \ll  N\ll N_1 \sim N_2$, we have 
$$
\frac{m(\zeta)}{m(\zeta_1) m(\zeta-\zeta_1)} \sim \frac{N_1^{1-s} N_2^{1-s}}{N^{2(1-s)}}. 
$$
Applying the $L^2_{\lambda}$ norm of functions $\widetilde{(P_{N_1} Q_{L_1} u)} (P_{N_0} Q_{L_0} w)$ and $P_{N_2} Q_{L_2} v$, we have that then the contribution of this case to $J^{L_0, L_1, L_2}_{N_0, N_1, N_2}$ is bounded by
\begin{equation*}
\lesssim  \frac{N_1^{1-s} N_2^{1-s}N_0^2}{N^{2(1-s)}N_1N_2 L_0^{1/2-}L_1^{1/2+}L_2^{1/2+}} \left\|\widetilde{(P_{N_1} Q_{L_1} u)} (P_{N_0} Q_{L_0} w)\right\|_{L^2_{\lambda}} \| P_{N_2} Q_{L_2} v \|_{L^2_{\lambda}},
\end{equation*} 
where $\widehat{\widetilde{f}}^{\lambda}(\zeta, \tau) = \widehat{f}^{\lambda}(-\zeta, -\tau)$. 
We apply \eqref{eq:b4} to this and have that that the contribution of this case to $J^{L_0, L_1, L_2}_{N_0, N_1, N_2}$ is bounded by
\begin{equation}\label{eq:HH1}
\begin{split}
 \lesssim &\frac{N_1^{1-s} N_2^{1-s}N_0^2}{N^{2(1-s)}N_1N_2 L_0^{1/2-}L_1^{1/2+}L_2^{1/2+}}\frac{N_0^{(1+\theta)/2}(L_0\wedge L_1)^{1/2}(L_0\vee L_1)^{\theta/2}}{N_1^{1-\theta}} \\
 &\quad  \| P_{N_1} Q_{L_1} u \|_{L^2_{\lambda}} \| P_{N_0} Q_{L_0} w \|_{L^2_{\lambda}} \| P_{N_2} Q_{L_2} v \|_{L^2_{\lambda}} \\
\lesssim &\frac{1}{N^{1-s}N_1^{s-1/2-3\theta/2}L_0^{0+}L_1^{0+}L_2^{0+}}  \| P_{N_1} Q_{L_1} u \|_{L^2_{\lambda}} \| P_{N_0} Q_{L_0} w \|_{L^2_{\lambda}} \| P_{N_2} Q_{L_2} v \|_{L^2_{\lambda}},
\end{split}
\end{equation}
for small $\theta>0$.

If $N\lesssim N_0\ll N_1\sim N_2$, we have 
$$
\frac{m(\zeta)}{m(\zeta_1) m(\zeta-\zeta_1)} \sim \frac{N_1^{1-s}N_2^{1-s}}{N^{1-s} N_0^{1-s}}.
$$
Similarly, the contribution of this case to $J^{L_0, L_1, L_2}_{N_0, N_1, N_2}$ is bounded by
\begin{equation}\label{eq:HH2}
\lesssim \frac{1}{N_1^{s-1/2-3\theta/2}N^{1-s}L_0^{0+}L_1^{0+}L_2^{0+}}  \| P_{N_1} Q_{L_1} u \|_{L^2_{\lambda}} \| P_{N_0} Q_{L_0} w \|_{L^2_{\lambda}} \| P_{N_2} Q_{L_2} v \|_{L^2_{\lambda}},
\end{equation}
for small $\theta>0$.

Summing up with respect to $N_0, N_1, N_2, L_0, L_1, L_2$, we obtain
$$
J_{HH \rightarrow L} \lesssim \| u \|_{L^2_{\lambda}} \| v \|_{L^2_{\lambda}} \| w \|_{L^2_{\lambda}}.
$$

\underline{Estimate of $J_{HH \rightarrow H}$:}
In this case, we may assume $N\ll N_0\sim N_1\sim N_2$ and have
$$
\frac{m(\zeta)}{m(\zeta_1) m(\zeta-\zeta_1)} \sim \frac{N_0^{1-s}}{N^{1-s}},\quad
\Gamma^{\xi_1, q_1, \tau_1}_{\xi, q, \tau}\sim \frac{|\xi|}{N_0 L_0^{1/2-}L_1^{1/2+}L_2^{1/2+}}.
$$
Now, we separate this case into five subcases:
\begin{itemize}
\item[(i)]
$|\xi|\lesssim 1$,
\item[(ii)]
$|\xi_1|\wedge |\xi-\xi_1|\lesssim 1 \ll |\xi|$,
\item[(iii)]
$1\ll |\xi_1|\wedge |\xi-\xi_1|\wedge |\xi|$ and  $||\zeta|^2-|\zeta_1|^2|\vee ||\zeta_1|^2-|\zeta-\zeta_1|^2|\vee ||\zeta-\zeta_1|^2-|\zeta|^2|\gtrsim N_0^{6/5}(L_0\vee L_1\vee L_1)^{0+}$,
\item[(iv)]
$1\ll |\xi_1|\wedge |\xi-\xi_1|\wedge |\xi|,~||\zeta|^2-|\zeta_1|^2|\wedge ||\zeta_1|^2-|\zeta-\zeta_1|^2|\wedge ||\zeta-\zeta_1|^2-|\zeta|^2|\ll N_0^{6/5}(L_0\vee L_1\vee L_1)^{0+}$ and $(|\xi|\vee |\xi_1|\vee |\xi-\xi_1|)(|\xi|\wedge |\xi_1|\wedge |\xi-\xi_1|)\gg N_0^{6/5}(L_0\vee L_1\vee L_1)^{0+}$,
\item[(v)]
$1\ll |\xi_1|\wedge |\xi-\xi_1|\wedge |\xi|,~||\zeta|^2-|\zeta_1|^2|\wedge ||\zeta_1|^2-|\zeta-\zeta_1|^2|\wedge ||\zeta-\zeta_1|^2-|\zeta|^2|\ll N_0^{6/5}(L_0\vee L_1\vee L_1)^{0+}$ and $(|\xi|\vee |\xi_1|\vee |\xi-\xi_1|)(|\xi|\wedge |\xi_1|\wedge |\xi-\xi_1|)\lesssim N_0^{6/5}(L_0\vee L_1\vee L_1)^{0+}$.
\end{itemize}

\underline{Subcase (i):}
Denote
$$
J_{N_0, N_1, N_2}^{L_0,L_1,L_2} =\sum_{k\in\mathbb{N}}J^{L_0, L_1, L_2}_{N_0, N_1, N_2}(k),
$$
where
\begin{equation*}
\begin{split}
J^{L_0, L_1, L_2}_{N_0, N_1, N_2}(k)  =   \frac{1}{\lambda^2}\sum_{q,q_1\in\mathbb{Z}/\lambda} \int_{Y_k} & \Gamma^{\xi_1, q_1, \tau_1}_{\xi, q, \tau}   \frac{m(\zeta)}{m(\zeta_1)m(\zeta-\zeta_1)} \\
&  \widehat{P_{N_1} Q_{L_1} u}^{\lambda}(\zeta_1, \tau_1)\widehat{P_{N_2} Q_{L_2} v}^{\lambda}(\zeta-\zeta_1, \tau-\tau_1)\widehat{P_{N_0} Q_{L_0} w}^{\lambda}(\zeta, \tau)\, d\xi d\xi_1 d\tau d\tau_1,
\end{split}
\end{equation*}
$$
Y_k = \{ (\xi, \xi_1, \tau, \tau_1) \in \mathbb{R}^4 \mid |\xi|\sim 2^{-k}\}.
$$ 
We apply the Cauchy-Schwarz inequality, and we have that the contribution of this case to $J^{L_0, L_1, L_2}_{N_0, N_1, N_2}(k)$ is bounded by
\begin{equation*}
\lesssim \frac{2^{-k}}{N^{1-s} N_0^s L_0^{1/2-} L_1^{1/2+} L_2^{1/2+}}\left\| R_{2^{-k}}\left((P_{N_1} Q_{L_1} u) (P_{N_2} Q_{L_2} v)\right)\right\|_{L^2_{\lambda}} \| P_{N_0} Q_{L_0} w \|_{L^2_{\lambda}}.
\end{equation*}
For  $|\xi| \sim 2^{-k}$, we obtain by \eqref{eq:res}
$$
\left |\frac {\partial^2 \mathcal{H}} {\partial \xi^2_1} (\xi_1, \xi - \xi_1, q_1, q - q_1) \right | = 6|\xi| \sim 2^{-k} .
$$ 
Using \eqref{eq:b3}, we get
$$
\|R_{2^{-k}} ( (P_{N_1} Q_{L_1} u) (P_{N_2} Q_{L_2} v ))\|_{L^2_{\lambda}} \lesssim 2^{k/4} N_0^{1/2} (L_1 \vee L_2)^{1/4} (L_1 \wedge L_2)^{1/2} \| P_{N_1} Q_{L_1} u \|_{L^2_{\lambda}} \| P_{N_2} Q_{L_2} v \|_{L^2_{\lambda}}.
$$
Then
\begin{equation}\label{eq:v-1}
J^{L_0, L_1, L_2}_{N_0, N_1, N_2}(k)  \lesssim  \frac{2^{-3k/4}}{N^{1-s}N_0^{s-1/2}L_0^{0+}L_1^{0+}L_2^{0+}} \| P_{N_1} Q_{L_1} u \|_{L^2_{\lambda}} \| P_{N_2} Q_{L_2} v \|_{L^2_{\lambda}} \| P_{N_0} Q_{L_0} w \|_{L^2_{\lambda}}.
\end{equation}
Summing with respect to dyadic numbers $N_0, N_1, N_2, L_0, L_1, L_2$ and $k\in\mathbb{N}$, we have the desired bound for the contribution of this case to $J_{HH \rightarrow H}$.

\underline{Subcase (ii):}
By symmetry, we may suppose $|\xi-\xi_1| \le |\xi_1|\wedge 1$.
First, we calculate the resonance function \eqref{eq:res} as
$$
\frac{\partial \mathcal{H}}{\partial \xi_1}(\xi_1, \xi-\xi_1, q, q-q_1) = 3 \xi (\xi-2\xi_1) +q(q-2q_1). 
$$

First we consider the case when $|\partial \mathcal{H}/\partial \xi_1| \gtrsim \xi^2$.
We shall use the dyadic decomposition $|\xi|\sim K$.
By the Cauchy-Schwarz inequality such as Case (i) above, we have that the contribution of this case to $J^{L_0, L_1, L_2}_{N_0, N_1, N_2}$ is bounded by
$$
\lesssim \sum_{K\in 2^{\mathbb{N}}} \frac{K}{N^{1-s}N_0^s L_0^{1/2-}L_1^{1/2+}L_2^{1/2+}}\|R_{K} ( (P_{N_1} Q_{L_1} u) (P_{N_2} Q_{L_2} v))\|_{L^2_{\lambda}} \|R_{K} P_{N_0} Q_{L_0} w \|_{L^2_{\lambda}}.
$$
Recall the proof of Lemma \ref{lem:bilinear}, we have
\begin{equation*}
\|R_{K} (  (P_{N_1} Q_{L_1} u) (P_{N_2} Q_{L_2} v) )\|_{L^2_{\lambda}} \lesssim \frac{1}{\lambda^{1/2}}\sup_{(\xi,q, \tau)\in\mathbb{R}\times\mathbb{Z}/\lambda\times\mathbb{R}} |A_{\xi,q,\tau}(K)|^{1/2} \| P_{N_1} Q_{L_1} u \|_{L^2_{\lambda}}  \|P_{N_2} Q_{L_2} v \|_{L^2_{\lambda}},
\end{equation*}
where
\begin{equation*}
\begin{split}
A_{\xi, q, \tau}(K) = \{ (\xi_1, q_1, \tau_1) \in \mathbb{R} \times \mathbb{Z}/\lambda \times \mathbb{R} \mid &  |\xi| \sim K,~ |\zeta_1| \in I_{N_1}, ~|\zeta-\zeta_1| \in I_{N_2},\\
& |\tau_1 - \sigma(\zeta_1)| \in I_{L_1}, |\tau - \tau_1 - \sigma(\zeta-\zeta_1)| \in I_{L_2} \}.
\end{split}
\end{equation*}
From the triangle inequality,
$$
|A_{\xi, q, \tau}(K)| \lesssim (L_1 \wedge L_2) |B_{\xi, q, \tau}(K)|,
$$
where
\begin{equation*}
\begin{split}
B_{\xi, q, \tau}(K) = \{ (\xi_1, q_1) \in \mathbb{R} \times \mathbb{Z}/\lambda \mid & |\xi| \sim K,~ |\zeta_1| \in I_{N_1}, |\zeta-\zeta_1| \in I_{N_2},\\
& |\tau - \sigma(\zeta) - \mathcal{H} (\xi_1, \xi - \xi_1, q_1, q - q_1)| \lesssim L_1 \vee L_2 \}.
\end{split}
\end{equation*}
Let $\widetilde{B}_{\xi, q, \tau}(K,q_1) = \{ \xi_1 \in \mathbb{R}\mid  \zeta_1 \in B_{\xi, q, \tau}(K) \}$ for each $q_1$. 
Then combining Lemma \ref{lem:m2} and the bound $|\partial \mathcal{H}/\partial \xi_1| \gtrsim \xi^2 \sim K^2$, we get
$$
|\widetilde{B}_{\xi, q, \tau}(K,q_1)| \lesssim \frac{L_1 \vee L_2}{K^2}.
$$
Hence,  we obtain
$$
|B_{\xi, q, \tau}| \lesssim \lambda \frac{(L_1 \vee L_2)(N_1 \wedge N_2)}{K^2}.
$$
Then, 
$$
\|R_{K}  ((P_{N_1} Q_{L_1} u) (P_{N_2} Q_{L_2} v))\|_{L^2_{\lambda}} \lesssim \frac{N_0^{1/2} (L_1 \vee L_0)^{1/2} (L_1 \wedge L_0)^{1/2}}{K}\| P_{N_1} Q_{L_1} u \|_{L^2_{\lambda}} \| P_{N_0} Q_{L_0} w \|_{L^2_{\lambda}}.
$$
Combining the Cauchy-Schwarz inequality and bilinear estimate, we obtain that the contribution of this case to $J^{L_0, L_1, L_2}_{N_0, N_1, N_2}$ is bounded by
\begin{equation}\label{eq:v-2-1}
\begin{split}
\lesssim &  \sum_{K\lesssim N_0} \frac{1}{N^{1-s}N_0^{s-1/2}L_0^{0+} L_1^{0+} L_2^{0+}}\| P_{N_1} Q_{L_1} u \|_{L^2_{\lambda}} \| P_{N_2} Q_{L_2} v \|_{L^2_{\lambda}} \|R_K P_{N_0} Q_{L_0} w \|_{L^2_{\lambda}}  \\
\lesssim & \frac{1}{N^{1-s}N_0^{s-1/2-}L_0^{0+} L_1^{0+} L_2^{0+}}\| P_{N_1} Q_{L_1} u \|_{L^2_{\lambda}} \| P_{N_2} Q_{L_2} v \|_{L^2_{\lambda}} \|P_{N_0} Q_{L_0} w \|_{L^2_{\lambda}}.
\end{split}
\end{equation}
Again summing with respect to dyadic numbers $N_0, N_1, N_2, L_0, L_1, L_2$, we have the desired bound for the contribution of this case to $J_{HH \rightarrow H}$.

Next, we consider the case when $|\partial \mathcal{H}/\partial \xi_1| \ll \xi^2$.
In this case, we have
\begin{equation*}
\begin{split}
\frac{\partial \mathcal{H}}{\partial \xi_1}(\xi_1, \xi - \xi_1, q_1, q-q_1) =& 3 \xi (\xi-\xi_1) -3\xi \xi_1 + q(q-q_1) -qq_1\\
=& 6\xi (\xi - \xi_1) -3 \xi^2 + q(q - q_1) - qq_1\\
=& O(\xi) - 3 \xi^2 + q(q-q_1) -qq_1.
\end{split}
\end{equation*}
To satisfy the hypothesis of resonance function $|\partial \mathcal{H}/\partial \xi_1| \ll \xi^2,~N_0 \sim N_1 \sim N_2$ and $|\xi - \xi_1| \lesssim 1\ll |\xi|$, we need $|q-q_1|\sim N_0,~|\xi|\lesssim |q|\wedge |q_1|$ to set $q(q-q_1) -qq_1\sim \xi^2$.
Furthermore we have
$$
|q| \sim |q_1| \sim |q-q_1| \sim |\xi| \sim N_0,\quad 3\xi \xi_1 (\xi - \xi_1) \sim O(\xi^2),\quad (\xi-\xi_1)(q^2 - (q-q_1)^2) \sim O(\xi^2).
$$
Let us recall the resonance function $\mathcal{H}$ in \eqref{eq:res}
\begin{equation*}
\mathcal{H}(\xi_1, \xi - \xi_1, q_1, q - q_1)=3\xi \xi_1 (\xi - \xi_1) + (\xi - \xi_1)(q^2-(q-q_1)^2) + (q-q_1)\xi_1 (q+q_1),
\end{equation*}

If $|\mathcal{H}(\xi_1, \xi - \xi_1, q_1, q - q_1)| \gtrsim \xi^2$, then $L_0\vee L_1\vee L_2\gtrsim \xi^2$ by \eqref{eq:res}.
Hence we have $|\xi|\lesssim N_0^{0+}(L_0\vee L_1\vee L_2)^{1/2-}$.
The same estimate as in \eqref{eq:LH2} implies that the contribution of this case to $J_{HH \rightarrow H}$ is bounded by
\begin{equation}\label{eq:v-2-2}
\begin{split}
\lesssim & \frac{N_0^{0+}N_0^{1-s}}{N_0N^{1-s} L_0^{0+} L_1^{0+} L_2^{0+}} \| (P_{N_1} Q_{L_1} u) (P_{N_2} Q_{L_2} v)\|_{L^2_{\lambda}} \| P_{N_0} Q_{L_0} w \|_{L^2_{\lambda}}\\
\sim & \frac{1}{N^{1-s}N_0^{s-} L_0^{0+} L_1^{0+} L_2^{0+}} \|P_{N_1} Q_{L_1} u\|_{L^2_{\lambda}} \|P_{N_2} Q_{L_2} v\|_{L^2_{\lambda}} \| P_{N_0} Q_{L_0} w \|_{L^2_{\lambda}}.
\end{split}
\end{equation}
Summing in dyadic numbers $N_0,N_1,N_2,L_0,L_1,L_2$, we have the bound of this case to $J_{HH \rightarrow H}$ by
$$
\lesssim \| u \|_{L^2_{\lambda}} \| v \|_{L^2_{\lambda}} \| w \|_{L^2_{\lambda}},
$$
as desired.

On the other hand, if $|\mathcal{H}(\xi_1, \xi - \xi_1, q_1, q - q_1)| \ll \xi^2$, it follows that $|(q-q_1) \xi_1 (q + q_1)| \lesssim O(\xi^2)$ and then $|q+q_1|\lesssim 1$.
In this case, we use the form
$$
q_1 = [q_1] +\left(q_1-[q_1]\right)
$$
and then
$$
q-q_1 =[q]+\left(q-[q]\right)-q_1=[q]-[q_1]+\left(q-[q]-q_1+[q_1]\right),
$$
where $[a]$ denotes integer part of $a\in\mathbb{R}$.
Note that $q_1-[q_1],~q-q_1-[q-q_1]\in \mathbb{Z}/\lambda\cap [0,1)$.
The restriction $|q+q_1|\lesssim 1$ implies $|2[q_1]+[q]|\lesssim 1$.
By performing the same calculation as \cite[Proof of Proposition 3.1]{Osawa} and \cite[Proof of Theorem 2.1]{Kenig}, we use the Cauchy-Schwarz inequality to have that the contribution of this case to $J_{HH \rightarrow H}$ is bounded by
$$
\lesssim \sup_{(\zeta,\tau)\in \mathbb{R}\times\mathbb{Z}/\lambda\times\mathbb{R}}\mathcal{I}(\zeta,\tau)\| u \|_{L^2_{\lambda}} \| v \|_{L^2_{\lambda}} \| w \|_{L^2_{\lambda}},
$$
where
$$
\mathcal{I}(\zeta,\tau)=\frac{N_0^{1-s}}{\lambda^{1/2}L_0^{1/2-}N^{1-s}}\left(\sum_{|2[q_1]+[q]|\lesssim 1}\int_{|\xi-\xi_1|\lesssim 1}\frac{d\xi_1}{\langle \tau-\sigma(\zeta)-\mathcal{H}(\xi_1,\xi-\xi_1,q_1,q-q_1)\rangle^{1+}}\right)^{1/2},
$$
where we assume $|\mathcal{H}(\xi_1, \xi - \xi_1, q_1, q - q_1)| \ll \xi^2$ in the integration of region.
Then it suffices to show
\begin{equation}\label{eq:kenig}
\sup_{(\zeta,\tau)\in \mathbb{R}\times\mathbb{Z}/\lambda\times\mathbb{R}}\mathcal{I}(\zeta,\tau)^2<\infty.
\end{equation}
The resonance function is reformulated as
\begin{equation*}
\mathcal{H} = 3\xi \xi_1 (\xi - \xi_1) + (\xi - \xi_1)(q^2 - (q-q_1)^2) + (q-q_1) \xi_1 (q+q_1),
\end{equation*}
which is equivalent to a quadratic equation in $\xi_1$,
$$
3\xi \xi_1^2-((2q-q_1)q+3\xi^2)\xi_1-(q^2-(q-q_1)^2)\xi+\mathcal{H} = 0.
$$
The roots of the quadratic equation are the values of $\xi_1$ as $\xi_1^{\pm}$, where
$$
\xi_1^{\pm}=\frac{(2q_1-q)q_1-3\xi^2\pm\sqrt{\left((2q_1-q)q-3\xi^2\right)^2+12\xi^2\left(q^2-(q-q_1)^2\right)-12\xi\mathcal{H}}}{6\xi}.
$$
Using the change of variables, we have
$$
d\xi_1 = \pm \frac{d \mathcal{H}}{\sqrt{\left((2q_1-q)q-3\xi^2\right)^2+12\xi^2\left(q^2-(q-q_1)^2\right)-12\xi\mathcal{H}}}.
$$
We evaluate integrals using the change of variables as 
\begin{equation*}
\begin{split}
&\int_{|\xi-\xi_1|\lesssim 1} \frac{d\xi_1}{ \langle \tau-\sigma(\zeta) - \mathcal{H}(\xi_1, \xi - \xi_1, q_1, q - q_1) \rangle^{1+}} \\
\lesssim & \int_{\mathbb{R}} \frac{d \mathcal{H}}{\langle \tau-\sigma(\zeta) - \mathcal{H}\rangle^{1+}\left|\left((2q_1-q)q-3\xi^2\right)^2+12\xi^2\left(q^2-(q-q_1)^2\right)-12\xi\mathcal{H}\right|^{1/2}}\\
\lesssim & \frac{1}{|\xi|^{1/2}+\left|\xi(\tau-\sigma(\zeta))+ \left((2q_1-q)q-3\xi^2\right)^2/12+\xi^2\left(q^2-(q-q_1)^2\right)\right|^{1/2}}.
\end{split}
\end{equation*}
Then we have the bound
$$
\int_{|\xi-\xi_1|\lesssim 1} \frac{d\xi_1}{ \langle \tau-\sigma(\zeta) - \mathcal{H}(\xi_1, \xi - \xi_1, q_1, q - q_1) \rangle^{1+}} \lesssim  \frac{1}{N_0^{1/2}}. 
$$
Therefore
\begin{equation}\label{eq:v-2-3}
\mathcal{I}(\zeta,\tau)\lesssim   \frac{N_0^{1-s}}{\lambda^{1/2}L_0^{1/2-}N^{1-s}}\left(\sum_{|2[q_1]+[q]|\lesssim 1}\frac{1}{N_0^{1/2}}\right)^{1/2}\lesssim \frac{1}{L_0^{1/2-}N^{1-s}N_0^{s-3/4}}\lesssim 1,
\end{equation}
which is acceptable for \eqref{eq:kenig}.

\underline{Subcase (iii):}
The proof follows from the same as one in \cite{Molinet} (also \cite[Proposition 3.1]{Osawa}).
By symmetry, we assume $||\zeta|^2-|\zeta_1|^2|\gtrsim N_0^{6/5}L_0^{0+}$.
Then
\begin{equation*}
\left| \frac{\partial \mathcal{H}}{\partial \xi_1} (\xi_1, \xi - \xi_1, q_1, q - q_1) \right|= \left||\zeta|^2-|\zeta_1|^2 \right |\gtrsim N_0^{6/5}L_0^{0+},
\end{equation*}
which modifies the computation appeared in \eqref{eq:A_1} for the proof of \eqref{eq:b2}.
Hence the contribution of this case to $J^{L_0, L_1, L_2}_{N_0, N_1, N_2}$ is bounded by
\begin{equation}\label{eq:v-3}
\begin{split}
\lesssim & \frac{N_0^{1-s}}{N^{1-s}L_0^{1/2-}L_1^{1/2+} L_2^{1/2+}} \frac {N_0^{1/2}(L_1\vee L_2)^{1/2}}{N_0^{3/5} L_0^{0+}} \| P_{N_1} Q_{L_1} u \|_{L^2_{\lambda}} \| P_{N_1} Q_{L_2} v \|_{L^2_{\lambda}} \| P_{N_0} Q_{L_0} w \|_{L^2_{\lambda}}\\
\lesssim & \frac{1}{N^{1-s}N_0^{s-9/10}L_0^{0+}L_1^{0+} L_2^{0+}}  \| P_{N_1} Q_{L_1} u \|_{L^2_{\lambda}} \| P_{N_1} Q_{L_2} v \|_{L^2_{\lambda}} \| P_{N_0} Q_{L_0} w \|_{L^2_{\lambda}},
\end{split}
\end{equation}
which is acceptable after taking the sum in dyadic numbers $N_0,N_1,N_2,L_0,L_1,L_2$.

\underline{Subcase (iv):}
We can rewrite resonance function $\mathcal{H}$ as
\begin{equation*}
\begin{split}
\mathcal{H} (\xi_1, \xi - \xi_1, q_1, q - q_1)  =  & 3 \xi \xi_1 (\xi-\xi_1) +\xi_1 q^2 - \xi q^2_1 -2 \xi_1 q q_1 + 2 \xi q q_1\\
= &-3 \xi \xi_1 (\xi-\xi_1) + P(\xi, \xi_1, q, q_1),
\end{split}
\end{equation*}
where
$$
P(\xi, \xi_1, q, q_1)=6\xi \xi_1 \xi_2 +\xi_1 q^2 - \xi q^2_1 -2 \xi_1 q q_1 + 2 \xi q q_1.
$$
By the same argument as in \cite{Molinet} (also \cite[Proposition 3.1]{Osawa}), we have in this case
\begin{equation*}
\begin{split}
\left|\mathcal{H} (\xi_1, \xi - \xi_1, q_1, q - q_1) \right| & \gtrsim (|\xi|\vee |\xi_1|\vee |\xi-\xi_1|)^2(|\xi|\wedge |\xi_1|\wedge |\xi-\xi_1|)\\
& \gtrsim (|\xi|\vee |\xi_1|\vee |\xi-\xi_1|)(|\xi|\wedge |\xi_1|\wedge |\xi-\xi_1|)|\xi|\\
& \gg N_0^{6/5}L_0^{0+}|\xi|\gtrsim |\xi|^{11/5}L_0^{0+}.
\end{split}
\end{equation*}
By symmetry we assume $L_0=L_0\vee L_1\vee L_2$, which implies $|\mathcal{H} (\xi_1, \xi - \xi_1, q_1, q - q_1)|\lesssim L_0$.
Combining these estimates in this case, we obtain $|\xi|\lesssim L_0^{5/11-}$.
We use \eqref{eq:b3} to have that the contribution of this case to $J^{L_0, L_1, L_2}_{N_0, N_1, N_2}$ is bounded by
\begin{equation}\label{eq:v-4}
\begin{split}
\lesssim & \sum_{K\lesssim L_0^{1/2--}}\frac{K}{N^{1-s}N_0^sL_0^{1/2-}L_1^{1/2+} L_2^{1/2+}} \frac{N_0^{1/2}}{K^{1/4}}(L_1\wedge L_2)^{1/2}(L_1\vee L_2)^{1/2}\\
&\quad  \| P_{N_1} Q_{L_1} u \|_{L^2_{\lambda}} \| P_{N_1} Q_{L_2} v \|_{L^2_{\lambda}} \| P_{N_0} Q_{L_0} w \|_{L^2_{\lambda}}\\
\lesssim & \frac{1}{N^{1-s}N_0^{s-9/10}L_0^{0+}L_1^{0+} L_2^{0+}}  \| P_{N_1} Q_{L_1} u \|_{L^2_{\lambda}} \| P_{N_1} Q_{L_2} v \|_{L^2_{\lambda}} \| R_KP_{N_0} Q_{L_0} w \|_{L^2_{\lambda}},
\end{split}
\end{equation}
which is acceptable after taking the sum in dyadic numbers $N_0,N_1,N_2,L_0,L_1,L_2$.

\underline{Subcase (v):}
Finally, we consider this case.
We repeat the argument in \cite{Molinet} (also \cite[Proposition 3.1]{Osawa}).
Introducing dyadic numbers $K_j$, we have that the contribution this case to $J^{L_0, L_1, L_2}_{N_0, N_1, N_2}$ is bounded by
\begin{equation}\label{eq:v1}
\lesssim \frac{1}{N^{1-s}N_0^{s}L_0^{1/2-}L_1^{1/2+}L_2^{1/2+}} \sum_{K_0, K_1,K_2} K_0\|R_{K_0}((R_{K_1}P_{N_1}Q_{L_1}u)(R_{K_2}P_{N_2}Q_{L_2}v))\|_{L^2_{\lambda}}\|R_{K_0}P_{N_0}Q_{L_0}w\|_{L^2_{\lambda}}. 
\end{equation}
In the proof of \eqref{eq:b3}, we modify that in \eqref{eq:B_1}
$$
|B^K_{\xi, q, \tau}| \lesssim \lambda \frac{(K_1 \wedge K_2) (L_1 \vee L_2)}{K^{\frac{1}{2}}},
$$
which shows
\begin{equation*}
\begin{split}
&\|R_{K_0}((R_{K_1}P_{N_1}Q_{L_1}u)(R_{K_2}P_{N_2}Q_{L_2}v))\|_{L^2_{\lambda}}\\
\lesssim & \frac{ (K_1 \wedge K_2)^{1/2} (L_1 \wedge L_2)^{1/2} (L_1 \vee L_2)^{1/4}}{K_0^{1/4}}  \| P_{N_1} Q_{L_1}u \|_{L^2_{\lambda}} \| P_{N_2} Q_{L_2}v \|_{L^2_{\lambda}}.
\end{split}
\end{equation*}
Then the left hand-side of \eqref{eq:v1} is controlled by
\begin{equation*}
\lesssim   \sum_{K_0, K_1,K_2} \frac{K_0^{3/4} (K_1 \wedge K_2)^{1/2} (L_1 \wedge L_2)^{1/2} (L_1 \vee L_2)^{1/4}}{N^{1-s}N_0^{s}L_0^{1/2-}L_1^{1/2+}L_2^{1/2+}}   \| P_{N_1} Q_{L_1} u \|_{L^2_{\lambda}} \| P_{N_2} Q_{L_2} v \|_{L^2_{\lambda}}\|P_{N_0}Q_{L_0}w\|_{L^2_{\lambda}},
\end{equation*}
where the sum is based on the region $(K_0\vee K_1\vee K_2)(K_0\wedge K_1\wedge K_2)\lesssim N_0^{6/5}(L_0\vee L_1\vee L_1)^{0+}$.
Since $K_0^{3/4} (K_1 \wedge K_2)^{1/2} \lesssim N_0^{9/10}(L_0\vee L_1\vee L_2)^{0+}$, we have that the contribution of this case to $J^{L_0, L_1, L_2}_{N_0, N_1, N_2}$ is bounded by
\begin{equation}\label{eq:v-5}
\lesssim  \frac{1}{N^{1-s}N_0^{s-9/10}L_0^{0+}L_1^{0+}L_2^{0+}}  \| P_{N_1} Q_{L_1} R_{K_1} u \|_{L^2_{\lambda}} \| P_{N_2} Q_{L_2} R_{K_2} v \|_{L^2_{\lambda}}\|R_{K_0}P_{N_0}Q_{L_0}w\|_{L^2_{\lambda}},
\end{equation}
which is acceptable.

We finish the proof of $J_{HH \rightarrow H}$, and hence Lemma \ref{prop:I-b}.
\end{proof}

We are now in position to prove Proposition \ref{prop:I-local}.
\begin{proof}[Proof of Proposition \ref{prop:I-local}.]
Define
\begin{equation*}
\Psi(u^{\lambda})(t)=\eta(t)e^{-t \partial_x \Delta} u_0^{\lambda}+\frac12\eta(t)\int_0^t e^{-(t-t') \partial_x \Delta}\partial_x\left(u^{\lambda}(t')^2\right)\,dt'.
\end{equation*}
We show that the map $\Psi$ defines a contraction in
$$
Y_{T,\lambda}=\left\{u^{\lambda}\in I^{-1}X^{1,1/2+}_{T,\lambda}\mid \|Iu^{\lambda}\|_{X^{1,1/2+}_{T,\lambda}}\le 2C\|Iu_0^{\lambda}\|_{H^1_{\lambda}}\right\},
$$
for (small) $T>0$, where the norm on $Y_{T,\lambda}$ is induced by $\|Iu^{\lambda}\|_{X^{1,1/2+}_{T,\lambda}}$,
In fact, from Lemmas \ref{lem:bourgain1}, \ref{lem:bourgain2} and \ref{lem:bourgain3} and \ref{prop:I-b}, it follows that
\begin{equation*}
\begin{split}
\|I\Psi(u^{\lambda})\|_{X^{1,1/2+}_{T,\lambda}} & \le C\|Iu_0^{\lambda}\|_{H^1_{\lambda}}+C\left\|\partial_xI(u^{\lambda})^2\right\|_{X^{1,-1/2+\varepsilon}_{T,\lambda}}\\
& \le C\|Iu_0^{\lambda}\|_{H^1_{\lambda}}+CT^{\varepsilon}\left\|\partial_xI(u^{\lambda})^2\right\|_{X^{s,-1/2+2\varepsilon}_{T,\lambda}}\\
& \le   C\|Iu_0^{\lambda}\|_{H^1_{\lambda}}+CT^{\varepsilon}\left\|Iu^{\lambda}\right\|_{X^{s,-1/2+}_{T,\lambda}}^2\\
& \le 2C\|Iu_0^{\lambda}\|_{H^1_{\lambda}},
\end{split}
\end{equation*}
for $u^{\lambda}\in Y_{T,\lambda}$ and choosing small $T>0$ which will depend on $\|Iu_0^{\lambda}\|_{H^s_{\lambda}}$.
Similarly,
$$
\left\|I\Psi(u^{\lambda})-I\Psi(v^{\lambda})\right\|_{X^{1,1/2+}_{T,\lambda}}\le \frac{1}{2}\left\|Iu^{\lambda}-Iv^{\lambda}\right\|_{X^{1,1/2+}_{T,\lambda}},
$$
for $u^{\lambda},v^{\lambda}\in Y_{T,\lambda}$ and small $T>0$.
Then the contraction mapping theorem tells us that there is a unique solution $Iu^{\lambda}=I\Psi(u^{\lambda})\in Y_{T,\lambda}$ to the Cauchy problem \eqref{eq:lambdaZK}.

The persistence properties $u^{\lambda}\in C([-\delta,\delta]:H^s_{\lambda})$ and the uniqueness in whole space $Iu^{\lambda}\in X^{1,1/2+}_{T,\lambda}$ are follow a similar argument to \cite[Proof of Theorem 1.5]{Kenig}, therefore we omit them.
\end{proof}

\section{Modified energy}\label{sec:energy}
\indent
  
The conservation quantities associated with \eqref{eq:lambdaZK} are
$$
E^{\lambda}[u^{\lambda}](t) = \frac{1}{2} \int_{\mathbb{R} \times \mathbb{T_{\lambda}}} \left({|\nabla u^{\lambda} (x,y,t)|}^2 - \frac{1}{3} {u^{\lambda}(x,y,t)}^3\right) \,dxdy,
$$
$$  
M^{\lambda}[u^{\lambda}](t) = \int_{\mathbb{R} \times \mathbb{T_{\lambda}}} {u^{\lambda}(x,y,t)}^2 \,dxdy.
$$

From $\mathcal{F}^{\lambda}u_0^{\lambda}(\xi,q)=\mathcal{F}u_0(\lambda\xi,\lambda q)$, it is easy to see
\begin{equation}\label{eq:lambdaL^2}
\left\|Iu^{\lambda}_0\right\|_{L^2_{\lambda}}\le \left\|u_0^{\lambda}\right\|_{L^2_{\lambda}}\le \frac{\left\|u_0\right\|_{L^2}}{\lambda}=o(1)
\end{equation}
for $\lambda\gg 1$.
Here the choice of the large parameter $N$ will be made latter, but $\lambda>1$ is chosen by
\begin{equation}\label{eq:choice}
\frac{N^{1-s}}{\lambda^{1+s}}=o(1),
\end{equation}
in which 
\begin{equation}\label{eq:nabla}
\left\|\nabla Iu^{\lambda}_0\right\|_{L^2_{\lambda}}\lesssim \frac{N^{1-s}}{\lambda^{1+s}}\|u_0\|_{H^s}=o(1),\quad \|Iu_0^{\lambda}\|_{H^1_{\lambda}}=o(1).
\end{equation}
Moreover, we have
\begin{equation}\label{eq:lambda-s}
\left\|u\left(\frac{t}{\lambda^3}\right)\right\|_{H^s}\lesssim \lambda^{1+s}\left\|Iu^{\lambda}(t)\right\|_{H^s_{\lambda}}\le \lambda^{1+s}\left\|Iu^{\lambda}(t)\right\|_{H^1_{\lambda}}.
\end{equation}

\begin{Remark}
By Gagliardo-Nirenberg inequality, the conservation law of $L^2_{\lambda}$-norm and \eqref{eq:lambdaL^2}, the solution $u^{\lambda}(t)$ satisfies
\begin{equation}\label{eq:gn}
\left\|\nabla Iu^{\lambda}(t)\right\|_{L^2_{\lambda}}^2 -C_1\left\|u_0^{\lambda}\right\|_{L^2_{\lambda}}^4\le C_2E^{\lambda}[Iu^{\lambda}](t),
\end{equation}
where constants $C_1$ and $C_2$ are independent of $\lambda$. 
\end{Remark}

Let us introduce the modified energy for proving global well-posedness.
Using the Fundamental Theorem of Calculus, we get
$$
E^{\lambda}[Iu^{\lambda}](\delta) - E^{\lambda}[Iu^{\lambda}](0) = \int^{\delta}_0 \frac{dE^{\lambda}[Iu^{\lambda}](t)}{dt}\,dt
$$
for $\delta>0$. We continue calculation of the integrand in the right hand.
Then
\begin{equation}\label{eq:de}
\begin{split}
\frac{dE^{\lambda}[Iu^{\lambda}]}{dt} (t) &= \int_{\mathbb{R} \times \mathbb{T}_{\lambda}} \left(\frac{d}{dt}\nabla Iu^{\lambda} \cdot \nabla Iu^{\lambda}-\frac12 (Iu^{\lambda})^2 \frac{d}{dt}Iu^{\lambda}\right)\, dxdy\\
&= \int_{\mathbb{R} \times \mathbb{T}_{\lambda}}\left( -I \partial_t u^{\lambda}\Delta Iu^{\lambda} - \frac12I \partial_t u^{\lambda}(Iu^{\lambda})^2\right)\,dxdy\\
&= \int_{\mathbb{R} \times \mathbb{T}_{\lambda}} \partial_x\left(I \Delta u^{\lambda} + \frac12I (u^{\lambda})^2\right)\left(\frac12(Iu^{\lambda})^2 + \Delta I u^{\lambda}\right)\,dxdy\\
&= -\frac12\int_{\mathbb{R} \times \mathbb{T}_{\lambda}} \left( I \Delta u^{\lambda} \partial_x\left((Iu^{\lambda})^2 - I(u^{\lambda})^2\right) +\frac12 I(u^{\lambda})^2\partial_x\left((Iu^{\lambda})^2 - I(u^{\lambda})^2\right)\right)\,dxdy
\end{split}
\end{equation}
where we use the equation in \eqref{eq:lambdaZK}.
Taking the integral on $[0,\delta]$, we have
\begin{equation}\label{eq:ie}
\begin{split}
E^{\lambda}[Iu^{\lambda}](\delta) - E^{\lambda}[Iu^{\lambda}](0)  = & -\frac12\int_0^{\delta}\!\int_{\mathbb{R} \times \mathbb{T}_{\lambda}}I \Delta u^{\lambda}\partial_x\left((Iu^{\lambda})^2 - I(u^{\lambda})^2\right)\,dxdydt\\
&-\frac14\int_0^{\delta}\!\int_{\mathbb{R} \times \mathbb{T}_{\lambda}}I(u^{\lambda})^2\partial_x\left((Iu^{\lambda})^2 - I(u^{\lambda})^2\right)\,dxdydt
\end{split}
\end{equation}
We will estimate two terms on the right hand of \eqref{eq:ie} with $\|Iu^{\lambda}\|_{X^{1,1/2+}_{\lambda,\delta}}$ by multilinear estimates associated with the transition of energy in \eqref{eq:ie}.

\begin{lemma}\label{lem:ibilinear}
For $s>9/10$,
\begin{equation}\label{eq:ibilinear}
\left\| \partial_x \left(IuIv - I(uv)\right) \right\|_{X^{1,-1/2+}_{\lambda}} \lesssim N^{-1/10+}  \| Iu \|_{X^{1,1/2+}_{\lambda}}\| Iv \|_{X^{1,1/2+}_{\lambda}}.
\end{equation}
\end{lemma}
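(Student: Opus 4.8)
The plan is to mirror the duality reduction and dyadic case analysis of Lemma~\ref{prop:I-b}, keeping track of the extra smallness carried by the commutator symbol. With $f=Iu$, $g=Iv$, the identity \eqref{eq:convolution} gives
$$
\mathcal{F}^{\lambda}\bigl(IuIv-I(uv)\bigr)(\zeta)=\frac{1}{\lambda}\sum_{q_1\in\mathbb{Z}/\lambda}\int_{\mathbb{R}}\Bigl(1-\tfrac{m(\zeta)}{m(\zeta_1)m(\zeta-\zeta_1)}\Bigr)\mathcal{F}^{\lambda}f(\zeta_1)\,\mathcal{F}^{\lambda}g(\zeta-\zeta_1)\,d\xi_1 .
$$
By duality, performing the substitution of the space-time weights onto generic nonnegative $L^2_{\lambda}$ functions as in the proof of Lemma~\ref{prop:I-b}, the assertion reduces to $|J|\lesssim N^{-1/10+}\|u\|_{L^2_{\lambda}}\|v\|_{L^2_{\lambda}}\|w\|_{L^2_{\lambda}}$, where $J$ has exactly the form \eqref{eq:I-b1}--\eqref{eq:JQ} with $\Gamma^{\xi_1,q_1,\tau_1}_{\xi,q,\tau}$ unchanged and the commutator symbol $\Theta:=1-\tfrac{m(\zeta)}{m(\zeta_1)m(\zeta-\zeta_1)}$ in place of $\tfrac{m(\zeta)}{m(\zeta_1)m(\zeta-\zeta_1)}$. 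We decompose $J=J_{LL\to L}+J_{LH\to H}+J_{HL\to H}+J_{HH\to L}+J_{HH\to H}$ as before.

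The one new ingredient is a pointwise bound for $\Theta$. Since $m$ is smooth, non-increasing, and slowly varying ($|\nabla m(\eta)|\lesssim m(\eta)/|\eta|$), one checks: (a) $\Theta\equiv 0$ if $N_0\vee N_1\vee N_2\lesssim N$; (b) if one of the three frequencies is $\lesssim N$ while the other two are $\gtrsim N$ and comparable, then the mean value theorem yields $|\Theta|\lesssim N_{\min}/N_{\max}$; (c) in every other configuration $\tfrac{m(\zeta)}{m(\zeta_1)m(\zeta-\zeta_1)}\gtrsim 1$, so $|\Theta|\sim\tfrac{m(\zeta)}{m(\zeta_1)m(\zeta-\zeta_1)}$ agrees, up to constants, with the multiplier already estimated in Lemma~\ref{prop:I-b}.

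Given (a)--(c), the argument is essentially bookkeeping of powers of $N$ on top of Lemma~\ref{prop:I-b}. First $J_{LL\to L}=0$ by (a). For $J_{HH\to L}$ and $J_{HH\to H}$, case (c) applies, so every bilinear and trilinear estimate used in those parts of the proof of Lemma~\ref{prop:I-b}---including the resonance-function and Loomis--Whitney--type subcases (i)--(v) of $J_{HH\to H}$ and the two subcases of $J_{HH\to L}$---carries over verbatim; moreover each resulting subcase bound, after the $L_j$ (and, for $J_{HH\to L}$, the low-frequency) summations, retains a factor $\lesssim N^{-(1-s)}N_{\mathrm{hi}}^{-(s-9/10)-}$ with $N_{\mathrm{hi}}$ the common high frequency (cf.\ \eqref{eq:v-3}, \eqref{eq:v-4}, \eqref{eq:v-5}, \eqref{eq:HH1}, \eqref{eq:HH2}), so summation over $N_{\mathrm{hi}}\gtrsim N$ produces $N^{-(1-s)-(s-9/10)}=N^{-1/10}$, the ``$+$'' absorbing the $L_j^{0+}$ losses. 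In $J_{LH\to H}$ (and $J_{HL\to H}$, by symmetry) we split on the low input frequency $N_1$: when $N\lesssim N_1$, case (c) and the estimate \eqref{eq:LH2} apply, and summation over $N_1\gtrsim N$ gives $N^{-(s-1/2)}\lesssim N^{-1/10}$; when $N_1\lesssim N$, the gain $N_1/N_0$ from (b), combined with the bilinear bound \eqref{eq:b2} as in \eqref{eq:LH1}, makes the contribution $O(N^{-1/2})$. Adding the five pieces yields the claim.

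The main difficulty is the one already present in Lemma~\ref{prop:I-b}: the high--high--high interaction $J_{HH\to H}$, which forces the delicate subdivision (i)--(v) of the resonance function $\mathcal{H}$ and the use of the refined bilinear estimates of Lemma~\ref{lem:bilinear}. Here, however, nothing genuinely new arises, since by (c) the commutator symbol $\Theta$ has the same size as the multiplier treated there; one reruns that analysis and merely records the additional $N^{-1/10+}$ decay, the threshold $s>9/10$ entering---exactly as before---through the summability of $N_{\mathrm{hi}}^{-(s-9/10)}$.
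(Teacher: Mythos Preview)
Your approach mirrors the paper's: both reduce to the duality form of Lemma~\ref{prop:I-b} with the commutator symbol inserted and re-run the identical five-piece case analysis while tracking the extra $N^{-1/10+}$ decay (the paper packages this gain into an auxiliary factor $\Xi=N^{1/10-}|m(\zeta_1)m(\zeta-\zeta_1)-m(\zeta)|/m(\zeta)$ and proves the modified $J\lesssim 1$; you carry $\Theta$ directly and track the power of $N$ explicitly). One minor imprecision in your write-up: the pointwise bound in (b) is only valid when the low frequency is an \emph{input} $\zeta_1$ or $\zeta-\zeta_1$; when the output $\zeta$ is the low one (as in $J_{HH\to L}$ with $N_0\lesssim N$) there is no mean-value cancellation and in fact $|\Theta|\sim (N_1/N)^{2(1-s)}\not\lesssim N_0/N_1$. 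This does not damage your argument, since you correctly invoke (c) rather than (b) for all of $J_{HH\to L}$, but the statement of (b) should be restricted to the low-input configuration.
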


\begin{Remark}
Comparing to the estimate in Lemma \ref{prop:I-b}, we have the small factor at the front of the right-hand, which corresponding to properties such as dispersion or the smoothing effects.
\end{Remark}

\begin{proof}
We recast the proof of Lemma \ref{prop:I-b} in the form of formula \eqref{eq:ibilinear}.
Here we used the same symbol over as in the proof of Lemma \ref{prop:I-b}. 
By Plancherel identity, it suffices to show that
\begin{equation}\label{eq:iI-b1}
\begin{split}
J= & \frac{1}{\lambda^2}\sum_{q,q_1 \in \mathbb{Z}/\lambda} \int_{\mathbb{R}^4} \Gamma^{\xi_1, q_1, \tau_1}_{\xi, q, \tau}\frac{m(\zeta)}{m(\zeta_1)m(\zeta-\zeta_1)}  \Xi(\zeta,\zeta_1,\zeta-\zeta_1)\\
& \qquad \widehat{u}^{\lambda}(\zeta_1, \tau_1) \widehat{v}^{\lambda}(\zeta-\zeta_1, \tau-\tau_1) \widehat{w}^{\lambda}(\zeta, \tau)\, d\xi d\xi_1 d \tau d \tau_1\\
\lesssim & \| u \|_{L^2_{\lambda}} \| v \|_{L^2_{\lambda}} \| w \|_{L^2_{\lambda}},
\end{split}
\end{equation}
for $u,v,w\in L^2_{\lambda}$ whose Fourier transform are nonnegative.
In \eqref{eq:iI-b1},
$$
\Gamma^{\xi_1, q_1, \tau_1}_{\xi, q, \tau} =\frac{ |\xi| \langle \zeta \rangle }{ \langle \zeta_1 \rangle \langle\zeta- \zeta_1 \rangle \langle \tau-\sigma(\zeta) \rangle^{1/2-} \langle \tau_1-\sigma(\zeta_1)\rangle^{1/2+} \langle \tau-\tau_1-\sigma(\zeta-\zeta_1) \rangle^{1/2+}},
$$
$$
\Xi(\zeta,\zeta_1,\zeta-\zeta_1)=\frac{N^{1/10-}}{m(\zeta)}\left| m(\zeta_1)m(\zeta-\zeta_1)-m(\zeta)\right|.
$$
Note the trivial bound
\begin{equation}\label{eq:Xi}
\Xi(\zeta,\zeta_1,\zeta-\zeta_1)\lesssim \frac{N^{1/10-}}{m(\zeta)}.
\end{equation}
Using dyadic decomposition in a similar way to the proof of Lemma \ref{prop:I-b}, we rewrite $J$ as the following 
\begin{equation*}
J = \sum_{\scriptstyle N_0,N_1,N_2 \atop\scriptstyle L_0, L_1, L_2} J^{L_0, L_1, L_2}_{N_0, N_1, N_2},
\end{equation*}
where
\begin{equation}\label{eq:iJQ}
\begin{split}
J^{L_0, L_1, L_2}_{N_0, N_1, N_2} = & \frac{1}{\lambda^2}\sum_{q,q_1\in\mathbb{Z}/\lambda} \int_{\mathbb{R}^4} \Gamma^{\xi_1, q_1, \tau_1}_{\xi, q, \tau}\frac{m(\zeta)}{m(\zeta_1)m(\zeta-\zeta_1)}\Xi(\zeta,\zeta_1,\zeta-\zeta_1) \\ 
& \widehat{P_{N_1} Q_{L_1} u}^{\lambda}(\zeta_1, \tau_1)  \widehat{P_{N_2} Q_{L_2} v}^{\lambda}(\zeta-\zeta_1, \tau-\tau_1)\widehat{P_{N_0} Q_{L_0} w}^{\lambda}(\xi, q, \tau)\,d\xi d\xi_1 d\tau d\tau_1.
\end{split}
\end{equation}
We repeat the same procedure as one of Lemma \ref{prop:I-b}.
Decompose again $J$ to five parts in frequencies:
$$
J_{LL \rightarrow L} = \sum_{\scriptstyle N_1\vee N_2 \vee N_0 \ll N \atop{\scriptstyle L_0,L_1,L_2}} J^{L_0, L_1, L_2}_{N_0, N_1, N_2},
$$
$$
J_{LH \rightarrow H} = \sum_{\scriptstyle N_1\ll N_2\sim N_0 \atop{\scriptstyle N_0 \gtrsim N \atop{\scriptstyle L_0,L_1,L_2}}} J^{L_0, L_1, L_2}_{N_0, N_1, N_2},
$$
$$
J_{HL \rightarrow H} = \sum_{\scriptstyle N_2\ll N_1\sim N_0  \atop{\scriptstyle N_0 \gtrsim N \atop{\scriptstyle L_0,L_1,L_2}}} J^{L_0, L_1, L_2}_{N_0, N_1, N_2},
$$
$$
J_{HH \rightarrow L} = \sum_{\scriptstyle N_0 \ll N_1\sim N_2  \atop{\scriptstyle N_1 \gtrsim N \atop{\scriptstyle L_0,L_1,L_2}}}J^{L_0, L_1, L_2}_{N_0, N_1, N_2},
$$
$$
J_{HH \rightarrow H} = J - (J_{LL \rightarrow L} + J_{HL \rightarrow H} + J_{LH \rightarrow H} + J_{HH \rightarrow L}).
$$
We estimate each of them by case analysis.

\underline{Estimate of $J_{LL \rightarrow L}$:}
In this region, we have $\Xi(\zeta,\zeta_1,\zeta-\zeta_1)=0$, since by $m(\zeta)=m(\zeta_1)=m(\zeta-\zeta_1)=1$.
Hence $J_{LL \rightarrow L}=0$.

\underline{Estimate of $J_{LH \rightarrow H}$:}
We split the case into two cases, when $N_1\gtrsim N$ and when $N_1\ll N$.
When $N_1\gtrsim N$, we use the bound \eqref{eq:Xi}.
On the other hand, when $N_1\ll N$, the mean value theorem gives
\begin{equation}\label{eq:mean}
|m(\zeta_1)m(\zeta-\zeta_1)-m(\zeta)|=|m(\zeta-\zeta_1)-m(\zeta)|\lesssim m'(N_0)N_1\sim \frac{m(N_0)N_1}{N_0}.
\end{equation}
By making use of the estimates in \eqref{eq:LH1} and \eqref{eq:LH2}, we easily have the desired bounds for the contribution of this case to $J_{LH \rightarrow H}$.
Indeed, when $N_1\ll N$, the computation in \eqref{eq:LH1} with \eqref{eq:Xi} leads the bound
\begin{equation*}
\begin{split}
\lesssim & \frac{N_1}{N_0}\frac{N^{1/10-}}{N_1^{1/2} L_0^{1/2-} L_1^{0+} L_2^{0+}} \|P_{N_1} Q_{L_1} u\|_{L^2_{\lambda}} \|P_{N_2} Q_{L_2} v\|_{L^2_{\lambda}} \| P_{N_0} Q_{L_0} w \|_{L^2_{\lambda}}\\
\lesssim & \frac{1}{N_1^{0+} L_0^{1/2-} L_1^{0+} L_2^{0+}} \|P_{N_1} Q_{L_1} u\|_{L^2_{\lambda}} \|P_{N_2} Q_{L_2} v\|_{L^2_{\lambda}} \| P_{N_0} Q_{L_0} w \|_{L^2_{\lambda}}.
\end{split}
\end{equation*}
While $N_1\gtrsim N$, we have $|m(\zeta_1)m(\zeta-\zeta_1)-m(\zeta)|\lesssim m(N_0)$ and then $\Xi(\zeta,\zeta_1,\zeta-\zeta_1)\lesssim N^{1/10-}$.
By the computation in \eqref{eq:LH2} with \eqref{eq:mean} we estimate this contribution by
\begin{equation*}
\begin{split}
\lesssim &  N^{1/10-}\frac{1}{N_1^{s-1/2} L_0^{1/2-} L_1^{0+} L_2^{0+}} \|P_{N_1} Q_{L_1} u\|_{L^2_{\lambda}} \|P_{N_2} Q_{L_2} v\|_{L^2_{\lambda}} \| P_{N_0} Q_{L_0} w \|_{L^2_{\lambda}}\\
\lesssim & \frac{1}{N_1^{0+} L_0^{0+} L_1^{0+} L_2^{0+}} \|P_{N_1} Q_{L_1} u\|_{L^2_{\lambda}} \|P_{N_2} Q_{L_2} v\|_{L^2_{\lambda}} \| P_{N_0} Q_{L_0} w \|_{L^2_{\lambda}}.
\end{split}
\end{equation*}
Then the desired estimate follows.

\underline{Estimate of $J_{HL \rightarrow H}$:}
We use symmetry arguments as for $J_{LH \rightarrow H}$ to conclude the proof.

\underline{Estimate of $J_{HH \rightarrow L}$:}
In this case, we use the bound \eqref{eq:Xi}.
By employing the same estimates as in \eqref{eq:HH1} and \eqref{eq:HH2}, we have the desired bounds.
Actually, the computation in \eqref{eq:HH1} will be changed into
\begin{equation*}
\begin{split}
\lesssim &\frac{N^{1/10-}}{N^{1-s}N_1^{s-1/2-3\theta/2}L_0^{0+}L_1^{0+}L_2^{0+}}  \| P_{N_1} Q_{L_1} u \|_{L^2_{\lambda}} \| P_{N_0} Q_{L_0} w \|_{L^2_{\lambda}} \| P_{N_2} Q_{L_2} v \|_{L^2_{\lambda}}\\
\lesssim &\frac{1}{N_1^{2/5-3\theta/2}L_0^{0+}L_1^{0+}L_2^{0+}}  \| P_{N_1} Q_{L_1} u \|_{L^2_{\lambda}} \| P_{N_0} Q_{L_0} w \|_{L^2_{\lambda}} \| P_{N_2} Q_{L_2} v \|_{L^2_{\lambda}},
\end{split}
\end{equation*}
for small $\theta>0$, while the formula in \eqref{eq:HH2} is changed by the same.

\underline{Estimate of $J_{HH \rightarrow H}$:}
In this case, we can assume $N\ll N_0\sim N_1\sim N_2$ and then
$$
\Xi(\zeta,\zeta_1,\zeta-\zeta_1)\lesssim  N^{1/10-}.
$$
We divide the region into five regions as in the proof of Lemma \ref{prop:I-b}.
On each case, we recall the estimates in \eqref{eq:v-1}, \eqref{eq:v-2-1}, \eqref{eq:v-2-2}, \eqref{eq:v-2-3},  \eqref{eq:v-3}, \eqref{eq:v-4} and \eqref{eq:v-5}, by multiplying $\Xi(\zeta,\zeta_1,\zeta-\zeta_1)$.
We see that $J_{HH \rightarrow H}\lesssim  \| u \|_{L^2_{\lambda}} \| v \|_{L^2_{\lambda}} \| w \|_{L^2_{\lambda}}$ holds provided $s>9/10$. 

In fact, in subcase (i), the estimate corresponding to \eqref{eq:v-1} will be
\begin{equation*}
\begin{split}
\lesssim   & N^{1/10-}\frac{2^{-3k/4}}{N^{1-s}N_0^{s-1/2}L_0^{0+}L_1^{0+}L_2^{0+}} \| P_{N_1} Q_{L_1} u \|_{L^2_{\lambda}} \| P_{N_2} Q_{L_2} v \|_{L^2_{\lambda}} \| P_{N_0} Q_{L_0} w \|_{L^2_{\lambda}}\\
\lesssim & \frac{2^{-3k/4}}{N_0^{2/5-}L_0^{0+}L_1^{0+}L_2^{0+}} \| P_{N_1} Q_{L_1} u \|_{L^2_{\lambda}} \| P_{N_2} Q_{L_2} v \|_{L^2_{\lambda}} \| P_{N_0} Q_{L_0} w \|_{L^2_{\lambda}},
\end{split}
\end{equation*}
which is acceptable after summing over $N_0,N_1,N_2,L_0,L_1,L_2$.

In subcase (ii), we revisit the estimates in \eqref{eq:v-2-1}, \eqref{eq:v-2-2} and \eqref{eq:v-2-3}.
We require the following estimate corresponding to \eqref{eq:v-2-1}
\begin{equation*}
\begin{split}
\lesssim &   N^{1/10-}\frac{1}{N^{1-s}N_0^{s-1/2-}L_0^{0+} L_1^{0+} L_2^{0+}}\| P_{N_1} Q_{L_1} u \|_{L^2_{\lambda}} \| P_{N_2} Q_{L_2} v \|_{L^2_{\lambda}} \|P_{N_0} Q_{L_0} w \|_{L^2_{\lambda}} \\
\lesssim & \frac{1}{N_0^{2/5-}L_0^{0+} L_1^{0+} L_2^{0+}}\| P_{N_1} Q_{L_1} u \|_{L^2_{\lambda}} \| P_{N_2} Q_{L_2} v \|_{L^2_{\lambda}} \|P_{N_0} Q_{L_0} w \|_{L^2_{\lambda}}.
\end{split}
\end{equation*}
The corresponding estimate to \eqref{eq:v-2-2} is
\begin{equation*}
\begin{split}
\lesssim & N^{1/10-} \frac{1}{N^{1-s}N_0^{s-} L_0^{0+} L_1^{0+} L_2^{0+}} \|P_{N_1} Q_{L_1} u\|_{L^2_{\lambda}} \|P_{N_2} Q_{L_2} v\|_{L^2_{\lambda}} \| P_{N_0} Q_{L_0} w \|_{L^2_{\lambda}}\\
\sim & \frac{1}{N_0^{9/10-} L_0^{0+} L_1^{0+} L_2^{0+}} \|P_{N_1} Q_{L_1} u\|_{L^2_{\lambda}} \|P_{N_2} Q_{L_2} v\|_{L^2_{\lambda}} \| P_{N_0} Q_{L_0} w \|_{L^2_{\lambda}}.
\end{split}
\end{equation*}
The corresponding estimate to \eqref{eq:v-2-3} is 
\begin{equation*}
\mathcal{I}(\zeta,\tau)\lesssim N^{1/10-} \frac{1}{L_0^{1/2-}N^{1-s}N_0^{s-3/4}}\lesssim 1.
\end{equation*}

By a similar proof to one of Lemma \ref{prop:I-b}, we have a desired bound. 

In subcase (iii), it suffices to show the following estimate corresponding to \eqref{eq:v-3} 
\begin{equation*}
\begin{split}
\lesssim & N^{1/10-}\frac{N_0^{1-s}}{N^{1-s}L_0^{1/2-}L_1^{1/2+} L_2^{1/2+}} \frac {N_0^{1/2}(L_1\vee L_2)^{1/2}}{N_0^{3/5} L_0^{0+}} \| P_{N_1} Q_{L_1} u \|_{L^2_{\lambda}} \| P_{N_1} Q_{L_2} v \|_{L^2_{\lambda}} \| P_{N_0} Q_{L_0} w \|_{L^2_{\lambda}}\\
\lesssim & \frac{1}{N_0^{0+}L_0^{0+}L_1^{0+} L_2^{0+}}  \| P_{N_1} Q_{L_1} u \|_{L^2_{\lambda}} \| P_{N_1} Q_{L_2} v \|_{L^2_{\lambda}} \| P_{N_0} Q_{L_0} w \|_{L^2_{\lambda}},
\end{split}
\end{equation*}
which holds for $s>9/10$.

In subcase (iv), the corresponding estimate to \eqref{eq:v-4} is 
\begin{equation*}
\begin{split}
\lesssim & N^{1/10-}\frac{1}{N^{1-s}N_0^{s-9/10}L_0^{0+}L_1^{0+} L_2^{0+}}  \| P_{N_1} Q_{L_1} u \|_{L^2_{\lambda}} \| P_{N_1} Q_{L_2} v \|_{L^2_{\lambda}} \| R_KP_{N_0} Q_{L_0} w \|_{L^2_{\lambda}}\\
\lesssim & \frac{1}{N_0^{0+}L_0^{0+}L_1^{0+} L_2^{0+}}  \| P_{N_1} Q_{L_1} u \|_{L^2_{\lambda}} \| P_{N_1} Q_{L_2} v \|_{L^2_{\lambda}} \| R_KP_{N_0} Q_{L_0} w \|_{L^2_{\lambda}},
\end{split}
\end{equation*}
which is acceptable for $s>9/10$.

Finally, in subcase (v), we need the following estimate corresponding estimate to \eqref{eq:v-5}
\begin{equation*}
\begin{split}
\lesssim & N^{1/10-}\frac{1}{N^{1-s}N_0^{s-9/10}L_0^{0+}L_1^{0+}L_2^{0+}}  \| P_{N_1} Q_{L_1} R_{K_1} u \|_{L^2_{\lambda}} \| P_{N_2} Q_{L_2} R_{K_2} v \|_{L^2_{\lambda}}\|R_{K_0}P_{N_0}Q_{L_0}w\|_{L^2_{\lambda}}\\
\lesssim &  \frac{1}{N_0^{0+}L_0^{0+}L_1^{0+}L_2^{0+}}  \| P_{N_1} Q_{L_1} R_{K_1} u \|_{L^2_{\lambda}} \| P_{N_2} Q_{L_2} R_{K_2} v \|_{L^2_{\lambda}}\|R_{K_0}P_{N_0}Q_{L_0}w\|_{L^2_{\lambda}},
\end{split}
\end{equation*}
which is also acceptable for $s>9/10$.

Hence we complete the proof. 
\end{proof}

By Lemma \ref{lem:ibilinear}, we have the following lemma.

\begin{lemma}\label{lem:3multi}
For $s>9/10$,
\begin{equation*}
\left|\int_0^{\delta}\!\int_{\mathbb{R} \times \mathbb{T}_{\lambda}}I \Delta u\partial_x\left((Iu)^2 - I(u)^2\right)\,dxdydt\right|\lesssim \frac{1}{N^{1/10-}}\left\|Iu\right\|_{X^{1,1/2+}_{\delta,\lambda}}^3.
\end{equation*}
\end{lemma}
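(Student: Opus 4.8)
The plan is to deduce this trilinear bound from the bilinear estimate of Lemma~\ref{lem:ibilinear} by duality, the mechanism being that the Laplacian costs exactly two derivatives, which are paid for by the $H^{1}$-level control of $Iu$.

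First I would pass to an extension: fix $\tilde u:\mathbb{R}\times\mathbb{T}_{\lambda}\times\mathbb{R}\to\mathbb{R}$ extending $u$ with $\|I\tilde u\|_{X^{1,1/2+}_{\lambda}}\lesssim\|Iu\|_{X^{1,1/2+}_{\lambda,\delta}}$, and set $g=\partial_x\big((I\tilde u)^{2}-I(\tilde u^{2})\big)$. Since $\tilde u=u$ on $[0,\delta]$, the left-hand side of the lemma equals
\begin{equation*}
\left|\int_{\mathbb{R}}\!\int_{\mathbb{R}\times\mathbb{T}_{\lambda}}\big(\mathbf{1}_{[0,\delta]}(t)\,I\Delta\tilde u\big)\,g\;dxdydt\right|.
\end{equation*}
Applying Parseval's identity on $\mathbb{R}\times\mathbb{T}_{\lambda}\times\mathbb{R}$ (using that $\langle\zeta\rangle$ and $\langle\tau-\sigma(\zeta)\rangle$ are unchanged under $(\zeta,\tau)\mapsto(-\zeta,-\tau)$, $\sigma$ being odd) and then the Cauchy--Schwarz inequality, this is bounded by
\begin{equation*}
\|\mathbf{1}_{[0,\delta]}\,I\Delta\tilde u\|_{X^{-1,1/2-}_{\lambda}}\,\|g\|_{X^{1,-1/2+}_{\lambda}}.
\end{equation*}

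Next I would estimate each factor. For the first, multiplication by the sharp indicator $\mathbf{1}_{[0,\delta]}$ is bounded on $X^{s,b}_{\lambda}$ whenever $|b|<1/2$, with a constant independent of $\delta$ (a standard property of Bourgain spaces), so it is $\lesssim\|I\Delta\tilde u\|_{X^{-1,1/2-}_{\lambda}}$; since $I$ commutes with $\Delta$ and $\Delta$ has symbol $O(\langle\zeta\rangle^{2})$, this is $\lesssim\|I\tilde u\|_{X^{1,1/2-}_{\lambda}}\le\|I\tilde u\|_{X^{1,1/2+}_{\lambda}}$. For the second factor, Lemma~\ref{lem:ibilinear} with $u=v=\tilde u$ (choosing its ``$1/2+$'' to be the same small exponent used above) gives $\|g\|_{X^{1,-1/2+}_{\lambda}}\lesssim N^{-1/10+}\|I\tilde u\|_{X^{1,1/2+}_{\lambda}}^{2}$. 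Multiplying these bounds and recalling the choice of $\tilde u$ yields the desired estimate $\lesssim N^{-1/10+}\|Iu\|_{X^{1,1/2+}_{\lambda,\delta}}^{3}$.

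As this is essentially a corollary of Lemma~\ref{lem:ibilinear}, no serious obstacle remains beyond that lemma itself; the only thing requiring attention is the matching of modulation weights. One must pair $I\Delta\tilde u$ in $X^{-1,1/2-}_{\lambda}$ against $g$ in $X^{1,-1/2+}_{\lambda}$: the exponent $1/2-$ is taken in the range $|b|<1/2$ so that the sharp cutoff is admissible, the passage $\|I\tilde u\|_{X^{1,1/2-}_{\lambda}}\le\|I\tilde u\|_{X^{1,1/2+}_{\lambda}}$ being free, while $-1/2+$ is precisely the modulation regularity that Lemma~\ref{lem:ibilinear} delivers for $g$. The whole gain $N^{-1/10+}$ originates in Lemma~\ref{lem:ibilinear}; nothing is lost at the $\Delta$ step.
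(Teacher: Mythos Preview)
Your proposal is correct and follows essentially the same approach as the paper: apply Parseval and Cauchy--Schwarz to split the trilinear integral into $\|I\Delta u\|_{X^{-1,\cdot}}$ times $\|\partial_x((Iu)^2-I(u^2))\|_{X^{1,\cdot}}$, absorb the Laplacian by trading two derivatives, and then invoke Lemma~\ref{lem:ibilinear} for the second factor to produce the $N^{-1/10+}$ gain. Your treatment is in fact slightly more careful than the paper's, since you make explicit the passage to an extension $\tilde u$ and the use of the sharp-cutoff boundedness on $X^{s,b}_\lambda$ for $|b|<1/2$, whereas the paper works formally in the restriction norms; but the argument is the same.
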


\begin{proof}

We apply the Parseval's identity and the Cauchy-Schwartz inequality to get the bound of the left-hand side by
$$
\lesssim \left\|I\Delta u\right\|_{X^{-1,1/2+}_{\delta,\lambda}}\left\|\partial_x(Iu)^2 - I(u)^2)\right\|_{X^{1,1/2-}_{\delta,\lambda}}\lesssim \left\|Iu\right\|_{X^{1,1/2+}_{\delta,\lambda}}\left\|\partial_x(Iu)^2 - I(u)^2)\right\|_{X^{1,1/2-}_{\delta,\lambda}}.
$$
Employing Lemma \ref{lem:ibilinear}, we have the desired bound.
\end{proof} 

\begin{lemma}\label{lem:4th}
For $s>31/40$,
\begin{equation*}
\left|\int_{\mathbb{R} \times \mathbb{T}_{\lambda}\times\mathbb{R}} Iu^2\partial_x\left((Iu)^2 - I(u)^2\right)\,dxdydt\right|\lesssim \frac{1}{N^{1/10+}}\|Iu\|_{X^{1,1/2+}_{\lambda}}^4.
\end{equation*}
\end{lemma}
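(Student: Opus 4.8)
The plan is to turn the quartic expression, by two integrations by parts in $x$, into a bilinear pairing that can be closed by Lemma~\ref{lem:ibilinear}. Abbreviating $A=I(u^2)$ and $B=(Iu)^2$, one has $\int_{\mathbb{R}\times\mathbb{T}_{\lambda}}A\,\partial_xA\,dxdy=\tfrac12\int\partial_x(A^2)\,dxdy=0$ and similarly $\int B\,\partial_xB\,dxdy=0$, so that
$$
\int_{\mathbb{R}\times\mathbb{T}_{\lambda}\times\mathbb{R}}I(u^2)\,\partial_x\bigl((Iu)^2-I(u^2)\bigr)\,dxdydt=\int A\,\partial_xB\,dxdydt=-\int\bigl((Iu)^2-I(u^2)\bigr)\,\partial_x(Iu)^2\,dxdydt.
$$
This rewriting is the crucial step: it keeps the difference $(Iu)^2-I(u^2)$ intact, whose Fourier symbol carries the factor $m(\zeta_1)m(\zeta_2)-m(\zeta_1+\zeta_2)$ responsible for the smallness $N^{-1/10+}$; if one instead paired $I(u^2)$ directly against $\partial_x((Iu)^2-I(u^2))$ and invoked Lemma~\ref{lem:ibilinear}, one would only recover the range $s>9/10$. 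Now by Plancherel and the Cauchy--Schwarz inequality — using the oddness of $\sigma$, which makes $X^{-1,1/2-}_{\lambda}$ and $X^{1,-1/2+}_{\lambda}$ dual — the right-hand side is bounded by $\bigl\|(Iu)^2-I(u^2)\bigr\|_{X^{-1,1/2-}_{\lambda}}\,\bigl\|\partial_x(Iu)^2\bigr\|_{X^{1,-1/2+}_{\lambda}}$.

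The second factor carries no outer $I$, so it is merely the bilinear estimate at regularity one: it is the content of Lemma~\ref{prop:I-b} with the index there specialized to $1$ (so that $I$ is the identity), applied to the functions $Iu$, and gives $\|\partial_x(Iu)^2\|_{X^{1,-1/2+}_{\lambda}}\lesssim\|Iu\|_{X^{1,1/2+}_{\lambda}}^2$ unconditionally. Thus the whole lemma reduces to the bilinear bound
$$
\bigl\|(Iu)^2-I(u^2)\bigr\|_{X^{-1,1/2-}_{\lambda}}\lesssim N^{-1/10+}\,\|Iu\|_{X^{1,1/2+}_{\lambda}}^2,\qquad s>\tfrac{31}{40}.
$$
This is a variant of Lemma~\ref{lem:ibilinear} in which one drops the outer $\partial_x$ and replaces the target space $X^{1,-1/2+}_{\lambda}$ by $X^{-1,1/2-}_{\lambda}$; informally one gains three output derivatives at the cost of one positive power of the output modulation, and since $|\mathcal{H}|\lesssim\langle\zeta\rangle^3$ this trade is net favourable, which is why the range improves from $9/10$ to $31/40$. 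I would prove it exactly as in Lemmas~\ref{prop:I-b} and~\ref{lem:ibilinear}: dualize against $w\in L^2_{\lambda}$ with nonnegative Fourier transform, normalize the unknowns so that $\langle\zeta\rangle\langle\tau-\sigma(\zeta)\rangle^{1/2+}\widehat{Iu}$ is a generic $L^2_{\lambda}$ function, decompose dyadically in $N_0,N_1,N_2$ and $L_0,L_1,L_2$, and split into the five regimes $J_{LL\to L}$ (which vanishes, since $m\equiv1$ there), $J_{LH\to H}$, $J_{HL\to H}$, $J_{HH\to L}$, $J_{HH\to H}$. On each piece one estimates the multiplier
$$
\frac{\bigl|m(\zeta_1)m(\zeta_2)-m(\zeta_0)\bigr|\,\langle L_0\rangle^{1/2-}}{\langle\zeta_0\rangle\langle\zeta_1\rangle\langle\zeta_2\rangle\,m(\zeta_1)m(\zeta_2)\,\langle L_1\rangle^{1/2+}\langle L_2\rangle^{1/2+}},\qquad\zeta_0=\zeta_1+\zeta_2,
$$
bounding $|m(\zeta_1)m(\zeta_2)-m(\zeta_0)|$ by the mean value theorem as in \eqref{eq:mean} when some input frequency is $\ll N_0$ and by the crude bound $\lesssim m(\zeta_0)$ otherwise (this is where the factor $N^{1/10-}$ is spent), applying the bilinear $L^2_{\lambda}$ estimates \eqref{eq:b1}--\eqref{eq:b4} of Lemma~\ref{lem:bilinear} to the convolution, and absorbing the new numerator $\langle L_0\rangle^{1/2-}$ via $\langle L_0\rangle\lesssim\langle L_1\rangle\vee\langle L_2\rangle\vee|\mathcal{H}|$ together with the lower bounds on $|\partial_{\xi_1}\mathcal{H}|$ and $|\partial_{\xi_1}^2\mathcal{H}|$ furnished by the resonance identity \eqref{eq:res}, just as in the proof of Lemma~\ref{lem:bilinear}.

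The hard part — and the source of the threshold $31/40$ — is the regime of two comparable high frequencies $N_1\sim N_2\gtrsim N$ producing the output (the cases $J_{HH\to L}$ and $J_{HH\to H}$), because there the quotient $m(\zeta_0)/(m(\zeta_1)m(\zeta_2))$ attains its maximal size $\sim(N_1\vee N_2)^{2(1-s)}N^{-2(1-s)}$ (up to powers of $\langle\zeta_0\rangle$) precisely in the nearly resonant sub-configurations where $\langle L_0\rangle$ is forced to be as large as $|\mathcal{H}|$, so the positive weight $\langle L_0\rangle^{1/2-}$ is most expensive exactly where the symbol is largest. The resources for absorbing this are the factor $\langle\zeta_0\rangle^{-1}$ (not present in Lemma~\ref{lem:ibilinear}), the absence of $\partial_x$, and the bilinear smoothing of \eqref{eq:b2}--\eqref{eq:b4}, which converts the largeness of $|\mathcal{H}|$ into a narrow restriction on the admissible interaction set; choosing in each case the favourable pairing in the Cauchy--Schwarz step, introducing the auxiliary $R_K$-decompositions in $\xi$ where \eqref{eq:b3}--\eqref{eq:b4} are needed, and summing the resulting geometric series in $N_0$, $N_1\vee N_2$ and $L_0,L_1,L_2$ is what pins down both $s>31/40$ and the surviving power $N^{-1/10+}$. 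The remaining regimes are handled by the same computations as in Lemma~\ref{lem:ibilinear}, only easier, and assembling them with the two factors of the displayed pairing completes the proof.
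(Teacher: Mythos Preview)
Your approach is genuinely different from the paper's, and it carries a real gap. The paper does \emph{not} reduce the quartic integral to two bilinear estimates; instead it writes the integral directly on the Fourier side as a four-fold convolution with multiplier $|\xi_3+\xi_4|\,\Gamma\,\Xi$, observes that the commutator symbol $m(\zeta_3)m(\zeta_4)-m(\zeta_3+\zeta_4)$ vanishes unless $N_3\vee N_4\gtrsim N$, bounds $|\xi_3+\xi_4|=|\xi_1+\xi_2|\lesssim(N_1\vee N_2)\wedge(N_3\vee N_4)$, and then simply applies the crudest bilinear estimate \eqref{eq:b1} once to the pair $(u_1,u_2)$ and once to $(u_3,u_4)$. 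A two-line arithmetic check that $x\mapsto x^{9/40-}m(x)$ is nondecreasing for $s>31/40$ finishes the proof. No case splitting, no resonance analysis, and no $L_0$-weight bookkeeping are needed.

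Your reduction, by contrast, hinges on the bilinear bound
\[
\bigl\|(Iu)^2-I(u^2)\bigr\|_{X^{-1,\,1/2-}_\lambda}\lesssim N^{-1/10+}\|Iu\|_{X^{1,\,1/2+}_\lambda}^2,
\]
which you do not prove. Estimating a product in a Bourgain space with \emph{positive} output modulation exponent is qualitatively different from Lemmas~\ref{prop:I-b} and~\ref{lem:ibilinear}: the weight $\langle L_0\rangle^{1/2-}$ sits in the numerator, while the bilinear inputs \eqref{eq:b1}--\eqref{eq:b4} give no decay in $L_0$ (they see only the input modulations). Your proposed remedy, $\langle L_0\rangle\lesssim\langle L_1\rangle\vee\langle L_2\rangle\vee|\mathcal{H}|$, does not obviously close: in the regime $L_0\sim|\mathcal{H}|\gg L_1\vee L_2$ you must both sum dyadic $L_0$ against $\|Q_{L_0}w\|_{L^2}$ \emph{and} absorb $|\mathcal{H}|^{1/2-}$. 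Your heuristic that swapping three output derivatives for one unit of output modulation is ``net favourable because $|\mathcal{H}|\lesssim\langle\zeta\rangle^3$'' is misleading precisely in the $HH\to L$ case, where $\langle\zeta_0\rangle\sim N_0$ can be arbitrarily smaller than $N_1\sim N_2$ while $|\mathcal{H}|$ still has size $N_1^3$: you gain only $N_0^{-3}$ but must pay $N_1^{3-}$. Even if the estimate is ultimately true, you have not derived the threshold $31/40$ from it --- you have only asserted that it emerges. Since the paper's direct route is both short and complete, the $X^{-1,1/2-}$ detour should be abandoned.
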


\begin{proof}
We apply Parseval's identity to the left-hand side and use the same argument as in the proof of Lemma \ref{prop:I-b}.

It suffices to show that
\begin{equation}\label{eq:4th1}
J=  \frac{1}{\lambda^3}\sum_{*} \int_{*} |\xi_3+\xi_4|\Gamma_{\zeta_1,\zeta_2,\zeta_3,\zeta_4}^{\tau_1,\tau_2,\tau_3,\tau_4}\Xi(\zeta_1,\zeta_2,\zeta_3,\zeta_4)\prod_{j=1}^4\widehat{u}^{\lambda}(\zeta_j, \tau_j)\lesssim  \| u \|_{L^2_{\lambda}}^4,
\end{equation}
for $u\in L^2_{\lambda}$ whose Fourier transform is nonnegative.
In \eqref{eq:4th1}, $\sum_{*} \int_{*}$ stands for the sum-integral of $(\xi_1,\xi_2,\xi_3,q_1,q_2,q_3,\tau_1,\tau_2,\tau_3)\in \mathbb{R}^3\times (\mathbb{Z}/\lambda)^3\times\mathbb{R}^3$ over $\zeta_1+\zeta_2+\zeta_3+\zeta_4=0$, and 
$$
\Gamma_{\zeta_1,\zeta_2,\zeta_3,\zeta_4}^{\tau_1,\tau_2,\tau_3,\tau_4}=\prod_{j=1}^4\frac{1}{m(\zeta_j)\langle \zeta_j\rangle\langle \tau_j-\sigma(\zeta_j) \rangle^{1/2+}},
$$
$$
\Xi(\zeta_1,\zeta_2,\zeta_3,\zeta_4)=m(\zeta_1+\zeta_2)\left| m(\zeta_3)m(\zeta_4)-m(\zeta_3+\zeta_4)\right|N^{1/10+}.
$$
Note the trivial bound
\begin{equation}\label{eq:4th1t}
\Xi(\zeta_1,\zeta_2,\zeta_3,\zeta_4)\lesssim N^{1/10+}.
\end{equation}
By dyadic decomposition in a similar way to the proof of Lemma \ref{prop:I-b}, we use $\langle \zeta_j\rangle \sim N_j$ and $\langle\tau_j-\sigma(\zeta_j)\rangle \sim L_j$, as before:
\begin{equation*}
J = \sum_{\scriptstyle N_1,N_2,N_3,N_4 \atop\scriptstyle L_1, L_2, L_3,L_4} J^{L_1, L_2, L_3,L_4}_{N_1, N_2, N_3,N_4},
\end{equation*}
where
\begin{equation*}
J^{L_1, L_2, L_3,L_4}_{N_1, N_2, N_3,N_4}=  \frac{1}{\lambda^3}\sum_{*} \int_{*} |\xi_3+\xi_4|\Gamma_{\zeta_1,\zeta_2,\zeta_3,\zeta_4}^{\tau_1,\tau_2,\tau_3,\tau_4}\Xi(\zeta_1,\zeta_2,\zeta_3,\zeta_4)\prod_{j=1}^4\widehat{P_{N_j} Q_{L_j} u}^{\lambda}(\zeta_j, \tau_j).\end{equation*}

We will show \eqref{eq:4th1} by the case-by-case analysis.

In case $N_3\vee N_4\ll N$, we have $\Xi(\zeta_1,\zeta_2,\zeta_3,\zeta_4)=0$, which is canceled out when evaluating \eqref{eq:4th1}.
Then we may assume $N_3\vee N_4\gtrsim N$ in \eqref{eq:4th1}.

Note that $|\xi_3+\xi_4|=|\xi_1+\xi_2|\lesssim (N_1\vee N_2)\wedge (N_3\vee N_4)$.
We take $(P_{N_1} Q_{L_1} u)(P_{N_2} Q_{L_2} u)$ and $(P_{N_3} Q_{L_3} u)(P_{N_4} Q_{L_4} u)$ in $L^2_{\lambda}$ by using \eqref{eq:b1}, respectively.
By \eqref{eq:4th1t},  the contribution of this case to $J^{L_1, L_2, L_3,L_4}_{N_1, N_2, N_3,N_4}$ is bounded by
\begin{equation*}
\begin{split}
\lesssim & \frac{(N_1\wedge N_2)(N_1\vee N_2)^{11/20}}{N_1m(N_1)N_2m(N_2)L_1^{0+}L_2^{0+}}\frac{(N_3\wedge N_4)(N_3\vee N_4)^{9/20}}{N_3m(N_3)N_4m(N_4)L_3^{0+}L_4^{0+}}N^{1/10+}\prod_{j=1}^4\|P_{N_j} Q_{L_j} u\|_{L^2_{\lambda}}\\
\lesssim & \prod_{j=1}^4\frac{1}{N_j^{9/40-}m(N_j)L_j^{0+}}\|P_{N_j} Q_{L_j} u\|_{L^2_{\lambda}}\\
\lesssim & \prod_{j=1}^4\frac{1}{N_j^{0+}L_j^{0+}}\|P_{N_j} Q_{L_j} u\|_{L^2_{\lambda}},
\end{split}
\end{equation*}
since the function $f(x)=x^{9/40-}m(x)$ is non-decreasing on $[1,\infty)$ provided $s>31/40$.
This is acceptable after taking the dyadic sum on $N_j$ and $L_j$.
\end{proof}

The following Lemma is not difficult to prove using Lemma \ref{lem:4th} as in \cite{Colliander,Colliander1}.

\begin{lemma}\label{lem:4multi}
For $s>31/40(<29/31)$,
\begin{equation*}
\left|\int_0^{\delta}\!\int_{\mathbb{R} \times \mathbb{T}_{\lambda}} I(u)^2\partial_x\left((Iu)^2 - I(u)^2\right)\,dxdydt\right|\lesssim \frac{1}{N^{1/10+}}\left\|Iu\right\|_{X^{1,1/2+}_{\delta,\lambda}}^4.
\end{equation*}
\end{lemma}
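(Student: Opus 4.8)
The plan is to reduce the statement to the global-in-time four-linear estimate of Lemma~\ref{lem:4th} by the standard time-localization device used in \cite{Colliander,Colliander1}. First, since the integrand on the left-hand side only involves the values of $u$ on the slab $\mathbb{R}\times\mathbb{T}_\lambda\times[0,\delta]$, I would invoke the definition of the restricted norm to fix an extension $\widetilde u\colon\mathbb{R}\times\mathbb{T}_\lambda\times\mathbb{R}\to\mathbb{R}$ with $\widetilde u=u$ on $\mathbb{R}\times\mathbb{T}_\lambda\times[0,\delta]$ and $\|I\widetilde u\|_{X^{1,1/2+}_\lambda}\le 2\|Iu\|_{X^{1,1/2+}_{\delta,\lambda}}$, and then replace $u$ by $\widetilde u$ throughout. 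Writing the time integral as $\int_0^\delta=\int_{\mathbb{R}}\mathbf{1}_{[0,\delta]}(t)\,dt$, the quantity to be estimated becomes
\begin{equation*}
\left|\int_{\mathbb{R}}\mathbf{1}_{[0,\delta]}(t)\int_{\mathbb{R}\times\mathbb{T}_\lambda} I(\widetilde u)^2\,\partial_x\!\left((I\widetilde u)^2-I(\widetilde u)^2\right)dxdy\,dt\right| .
\end{equation*}

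Next I would run the argument of Lemma~\ref{lem:4th} for $\widetilde u$, the only new feature being the cutoff $\mathbf{1}_{[0,\delta]}(t)$. In that argument one Littlewood--Paley and modulation decomposes the four factors, bounds the resulting space/time-frequency multiplier $|\xi_3+\xi_4|\,\Gamma\,\Xi$ on each dyadic block by a constant carrying the gain $N^{1/10+}$ (through the trivial bound $\Xi\lesssim N^{1/10+}$), and then pairs the two quadratic pieces in $L^2(\mathbb{R}\times\mathbb{T}_\lambda\times\mathbb{R})$ via the bilinear estimates \eqref{eq:b1}--\eqref{eq:b4}. The cutoff enters only at this pairing step, as the factor $\mathbf{1}_{[0,\delta]}(t)$ sitting inside one of the $L^2_{t,x,y}$-norms, where it is harmless since $|\mathbf{1}_{[0,\delta]}|\le 1$ (equivalently, in frequency it relaxes the constraint $\tau_1+\tau_2+\tau_3+\tau_4=0$ to the $\delta$-uniform weight $|\widehat{\mathbf{1}_{[0,\delta]}}(\tau)|\lesssim\langle\tau\rangle^{-1}$, which is absorbed by the spare factors $N_j^{0+}L_j^{0+}$ produced in that proof; recall $\delta\lesssim\|Iu_0^\lambda\|_{H^1_\lambda}=o(1)$ by Proposition~\ref{prop:I-local} and \eqref{eq:nabla}, so $\delta\le1$). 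Every dyadic bound, hence the condition $s>31/40$ and the gain $N^{-1/10-}$, is therefore unchanged, and one arrives at $\lesssim N^{-1/10-}\|I\widetilde u\|_{X^{1,1/2+}_\lambda}^4\lesssim N^{-1/10-}\|Iu\|_{X^{1,1/2+}_{\delta,\lambda}}^4$, which is the claim.

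I do not expect a genuine obstacle: all the harmonic-analytic work lives in Lemma~\ref{lem:4th}, and the time-localization above is purely bookkeeping, the one point to notice being that it is free of any $N$-dependence and costs at most an $\varepsilon$ in the modulation exponents, which Lemma~\ref{lem:4th} can afford. In particular the regularity restriction $s>31/40$ is inherited from Lemma~\ref{lem:4th} (it is the range in which $x\mapsto x^{9/40-}m(x)$ is non-decreasing on $[1,\infty)$), and it lies comfortably below the threshold $s>29/31$ of Theorem~\ref{thm:main}. (Alternatively, one may first multiply $\widetilde u$ by a smooth time cutoff $\rho\in C^{\infty}_0(\mathbb{R})$ equal to $1$ on $[0,\delta]$, using that multiplication by such a $\rho$ is bounded on $X^{1,b}_\lambda$ for $b$ near $1/2$, and proceed as above; this changes nothing essential.)
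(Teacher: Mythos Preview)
Your proposal is correct and matches the paper's own treatment: the paper gives no argument for Lemma~\ref{lem:4multi} beyond the sentence ``not difficult to prove using Lemma~\ref{lem:4th} as in \cite{Colliander,Colliander1}'', and your reduction---pick an extension realizing the restricted norm, absorb the time cutoff at the $L^2_{t,x,y}$-pairing stage (or, more cleanly, via a smooth cutoff bounded on $X^{1,1/2+}_\lambda$), and invoke Lemma~\ref{lem:4th}---is precisely that device. The only point worth flagging is that the sharp cutoff $\mathbf{1}_{[0,\delta]}$ does not commute with the $Q_L$ projections, so the cleanest route is the smooth-cutoff alternative you mention at the end (or, equivalently, noting that the multiplier in Lemma~\ref{lem:4th} depends only on the spatial frequencies $\zeta_j$, so one may bound it on spatial dyadic blocks first and keep the $t$-integral over $[0,\delta]$ in physical space throughout); either way this is exactly the bookkeeping of \cite{Colliander,Colliander1} and carries no $N$-dependence.
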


\section{Proof of main theorem}\label{sec:GWP}
\indent

In this section, we will give the proof of Theorem \ref{thm:main}, by the local well-posedness results of Proposition \ref{prop:I-local} in conjunction with Lemmas \ref{lem:3multi} and \ref{lem:4multi}.
The proof follows the strategy described in \cite{Colliander}.
We prove the global well-posedness by the local well-posedness theory for solution $Iu$ obtained in Proposition \ref{prop:I-local} and by the upper bound of increment of modified energy from \eqref{eq:ie}.

\begin{proof}[Proof of Theorem \ref{thm:main}]
Consider the Cauchy problem \eqref{ZK} on an arbitrary time interval $[0,T]$.
We rescale solutions by scaling \eqref{eq:scaling} and consider the rescaled problem in \eqref{eq:lambdaZK} with initial data
$$
u_0^{\lambda}(x,y)=\frac{1}{\lambda^2}u_0\left(\frac{x}{\lambda},\frac{y}{\lambda}\right).
$$
The goal is to construct rescaled solution $u^{\lambda}(t)$ to the Cauchy problem \eqref{eq:lambdaZK} on the time interval $[0,\lambda^3 T]$.

By the local well-posedness result of Proposition \ref{prop:I-local} along with \eqref{eq:nabla}, our solution $u^{\lambda}(t)$ satisfies $\|Iu^{\lambda}\|_{X^{1,1/2+}_{\delta,\lambda}}\ll 1$.
Applying Lemmas \ref{lem:3multi} and \ref{lem:4multi} to the computation of \eqref{eq:ie}, we have
$$
E^{\lambda}[Iu^{\lambda}](\delta)\le E^{\lambda}[Iu^{\lambda}](0)+\frac{o(1)}{N^{1/10-}}=o(1).
$$
By using \eqref{eq:lambdaL^2} and \eqref{eq:gn}, we have 
$$
\left\|Iu^{\lambda}(\delta)\right\|_{H^1_{\lambda}}=o(1).
$$
We can iterate this process up to $N^{1/10-}$ times before doubling $\|Iu^{\lambda}(\delta)\|_{H^1_{\lambda}}$.
This process extends the local solution obtained by Proposition \ref{prop:I-local} to the time $O(N^{1/10-}\delta)$.
We choose $N$ such that by \eqref{eq:choice}
$$
N^{1/10-}\delta >\lambda^3T\sim N^{3(1-s)/(1+s)}T,
$$
which may be done for $s>29/31$, hence complete the claim of Theorem \ref{thm:main}. 

Furthermore, replacing $\lambda \sim N^{(1-s)/(1+s)}$ and $N\sim T^{10(1+s)/(31s-29)+}$ in \eqref{eq:lambda-s}, the growth of Sobolev norm of the solution $u(t)$ to the Cauchy problem \eqref{ZK} has a bound at least
$$
\|u(T)\|_{H^s} \lesssim \lambda^{1+s}\left\|Iu^{\lambda}(\lambda^3 T)\right\|_{H^1_{\lambda}}\lesssim T^{10(1-s)/(31s-29)+}.
$$
This finishes the proof. 
\end{proof}


\Addresses
\end{document}